\newcommand{\indicator}[1]{\ensuremath{\mathbf{1}_{\{#1\}}}}
\newcommand{\oindicator}[1]{\ensuremath{\mathbf{1}_{#1}}}
\newcommand{\E}{\mathbb{E}}
\newcommand{\Prob}{\mathbb{P}}
\newcommand{\R}{\mathbb{R}}
\newcommand{\var}{\operatorname{Var}}
\newcommand{\tr}{\mathrm{tr}}
\newcommand{\eps}{\varepsilon}
\theoremstyle{plain}
  \newtheorem{theorem}{Theorem}
  \newtheorem{lemma}[theorem]{Lemma}
\theoremstyle{definition}
  \newtheorem{definition}[theorem]{Definition}
  \newtheorem{remark}[theorem]{Remark}
\begin{document}
\title[Partial Linear Eigenvalue Statistics]{Partial Linear Eigenvalue Statistics for Wigner and Sample Covariance Random Matrices}

\author[S. O'Rourke]{Sean O'Rourke}
\address{Department of Mathematics, Yale University, New Haven , CT 06520, USA  }
\email{sean.orourke@yale.edu}

\author[A. Soshnikov]{Alexander Soshnikov}
\address{Department of Mathematics, University of California, Davis, One Shields Avenue, Davis, CA 95616-8633  }
\email{soshniko@math.ucdavis.edu}

\begin{abstract}
Let $M_n$ be a $n \times n$ Wigner or sample covariance random matrix, and let $\mu_1(M_n), \mu_2(M_n), \ldots, \mu_n(M_n)$ denote the unordered eigenvalues of $M_n$.  We study the fluctuations of the partial linear eigenvalue statistics 
$$ \sum_{i=1}^{n-k} f(\mu_i(M_n)) $$
as $n \rightarrow \infty$ for sufficiently nice test functions $f$.  We consider both the case when $k$ is fixed and when $\min\{k,n-k\}$ tends to infinity with $n$.  
\end{abstract}

\maketitle

\section{Introduction}

We consider two classic random matrix ensembles with independent entries.  

\subsection{Wigner Random Matrices}

\begin{definition}[Wigner random matrix] \label{def:wigner}
We say $M_n = \frac{1}{\sqrt{n}} W_n = \frac{1}{\sqrt{n}} (w_{nij})_{1 \leq i,j \leq n}$ is a real symmetric (Hermitian) Wigner matrix of size $n$ if $M_n$ is a $n \times n$ real symmetric (Hermitian) matrix that satisfies the following.
\begin{enumerate}[(i)]
\item $\{w_{nij} : 1 \leq i \leq j \leq n \}$ is a collection of independent random variables,
\item for $1 \leq i < j \leq n$, $w_{nij}$ has zero mean and unit variance,
\item for $1 \leq i \leq n$, $w_{nii}$ has zero mean and variance $\sigma^2$.  
\end{enumerate}
\end{definition}

For a Wigner matrix $M_n$ of size $n$, we let $\lambda_1(M_n) \leq \lambda_2(M_n) \leq \cdots \leq \lambda_n(M_n)$ denote the \textit{ordered} eigenvalues of $M_n$ and let $\mu_1(M_n), \mu_2(M_n), \ldots, \mu_n(M_n)$ denote the \textit{unordered} eigenvalues of $M_n$.  That is, $\mu_i(M_n) = \lambda_{\pi(i)}(M_n)$ for $1 \leq i \leq n$, where $\pi$ is a random permutation on $\{1,2,\ldots,n\}$, chosen uniformly, independent of $M_n$.  

We will be interested in sequences of Wigner random matrices $\{M_n\}_{n \geq 1}$ that satisfy the following condition.  

\begin{definition}[Condition {\bf C0}]
For each $n \geq 1$, let $M_n$ be a real symmetric (Hermitian) Wigner matrix of size $n$.  We say the sequence $\{M_n\}_{n \geq 1}$ satisfies condition {\bf C0} with exponent $p \geq 0$ if there exists $\eps > 0$ such that
$$ \lim_{n \rightarrow \infty} n^{p/2} \left( \frac{n^{4 \eps}}{n^2} \sum_{1 \leq i < j \leq n} \E \left[ w_{nij}^4 \indicator{|w_{nij}| > n^{1/2 - \eps}} \right] + \frac{n^{2 \eps}}{n} \sum_{i=1}^n \E\left[ w_{nii}^2 \indicator{|w_{nii}| > n^{1/2 - \eps}}\right] \right) = 0, $$
where $\oindicator{E}$ denotes the indicator function of the event $E$.  
\end{definition}

\begin{remark}
For each $n \geq 1$, let $M_n = \frac{1}{\sqrt{n}} W_n = \frac{1}{\sqrt{n}} (w_{nij})_{1 \leq i, j \leq n}$ be a real symmetric (Hermitian) Wigner matrix of size $n$.  We note that if there exists $\delta > 0$ such that
$$ \sup_{n \geq 1, 1 \leq i < j \leq n} \E|w_{nij}|^{4+p+\delta} < \infty \quad \text{and} \quad \sup_{n \geq 1, 1 \leq i \leq n} \E|w_{nii}|^{2+p+\delta} < \infty, $$
then $\{M_n\}_{n \geq 1}$ satisfies condition {\bf C0} with exponent $p$.  We will mostly be interested in the cases when $p=0,1$.  
\end{remark}

\subsection{Sample Covariance Random Matrices}

\begin{definition}[Sample covariance matrix] \label{def:sc}
Let $A_n = \frac{1}{n} X_n^\ast X_n$ be an $n \times n$ matrix where $X_n = (x_{nij})_{1 \leq i,j \leq n}$.  We say that $A_n$ is a real (complex) sample covariance matrix of size $n$ if $\{x_{nij} : 1 \leq i,j \leq n\}$ is a collection of real (complex) independent random variables each with zero mean and unit variance.  In the complex case, we also require $\E (x_{nij}^2 )= 0$ for all $1 \leq i,j \leq n$.  
\end{definition}

For a sample covariance matrix $A_n$ of size $n$, we let $\lambda_1(A_n) \leq \lambda_2(A_n) \leq \cdots \leq \lambda_n(A_n)$ denote the \textit{ordered} eigenvalues of $A_n$ and let $\mu_1(A_n), \mu_2(A_n), \ldots, \mu_n(A_n)$ denote the \textit{unordered} eigenvalues of $A_n$.  

We will be interested in sequences of sample covariance matrices $\{A_n\}_{n \geq 1}$ that satisfy the following condition.  

\begin{definition}[Condition {\bf C1}]
For each $n \geq 1$, let $A_n = \frac{1}{n} X_n^\ast X_n$ be a real (complex) sample covariance matrix of size $n$ where $X_n = (x_{nij})_{1 \leq i,j \leq n}$.  We say the sequence $\{A_n\}_{n \geq 1}$ satisfies condition {\bf C1} if the random variables $x_{nij}, 1 \leq i,j \leq n$ have symmetric distribution for all $n \geq 1$ and there exists a constant $C_1$ such that
$$ \sup_{n \geq 1, 1 \leq i,j \leq n} \E|x_{nij}|^p \leq (C_1 \sqrt{p})^p \quad \text{for all} \quad p \geq 1. $$
\end{definition}

\subsection{Known Results}

For a Hermitian $n \times n$ matrix $B$, the empirical spectral distribution (ESD) $F^{B}(x)$ of $B$ is given by 
$$ F^{B}(x) := \frac{1}{n} \# \left\{ 1 \leq i \leq n : \lambda_i(B) \leq x \right\}, $$
where $\lambda_1(B), \lambda_2(B), \ldots, \lambda_n(B)$ denote the eigenvalues of $B$.  
Here $\#S$ denotes the cardinality of the set $S$.  

A fundamental problem in random matrix theory is to determine the limiting distribution of the ESD as the size of the matrix tends to infinity.  In the 1950s, Wigner studied the limiting ESD for a large class of random Hermitian matrices whose entries on or above the diagonal are independent \cite{W}.  Under certain conditions, Wigner showed that the ESD of such a matrix converges to the semicircle law $F$ with density given by 
\begin{equation} \label{eq:def:sc}
\rho(x):= \left\{
     \begin{array}{lr}
       \frac{1}{2\pi}\sqrt{4-x^2} , &-2 \leq x \leq 2 \\
       0 ,& \text{otherwise}.
     \end{array}
   \right. 
\end{equation}
   
The most general form of Wigner's semicircle law assumes only the first two moments of the entries \cite[Theorem 2.9]{BSbook}.

\begin{theorem}[Wigner's semicircle law] \label{thm:sclaw}
For each $n \geq 1$, let $M_n = \frac{1}{\sqrt{n}} W_n = \frac{1}{\sqrt{n}} (w_{nij})_{1 \leq i,j \leq n}$ be a real symmetric (Hermitian) Wigner matrix of size $n$.  Assume that for any $\varepsilon >0$, 
$$ \lim_{n \rightarrow \infty} \frac{1}{n^2} \sum_{i.j=1}^n \E|w_{nij}|^2 \indicator{|w_{nij}| \geq \varepsilon \sqrt{n}} = 0. $$
Then the ESD of $M_n$ converges to the semicircle law $F$ with density $\rho$ defined in \eqref{eq:def:sc}, almost surely as $n \rightarrow \infty$.  Equivalently, for any continuous, bounded function $f$,
$$ \frac{1}{n} \sum_{i=1}^n f(\lambda_i(M_n)) \longrightarrow \int_{-\infty}^{\infty} f(x) \rho(x) dx $$
almost surely as $n \rightarrow \infty$.  
\end{theorem}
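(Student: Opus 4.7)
\medskip

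\noindent\textbf{Proof plan.} I would proceed by the classical method of moments. The plan is to show that for every fixed integer $k\ge 0$, the normalized trace $\frac{1}{n}\Tr(M_n^k)$ converges to the $k$th moment of the semicircle law, with a variance estimate strong enough to upgrade the convergence to the almost sure statement via Borel--Cantelli. Weak convergence of the ESD then follows from the fact that the semicircle law is compactly supported and hence determined by its moments; the statement about continuous bounded $f$ can be deduced from weak convergence on a compact enlargement of the support after a standard tightness argument handling eigenvalues that may leave a neighborhood of $[-2,2]$.

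\medskip

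The first step is a truncation and recentering. Using the hypothesized Lindeberg-type condition, I would fix a small $\eps>0$, define $\tilde w_{nij} := w_{nij}\oindicator{|w_{nij}|\le \eps\sqrt{n}} - \E[w_{nij}\oindicator{|w_{nij}|\le \eps\sqrt{n}}]$, and let $\tilde M_n := \frac{1}{\sqrt n}(\tilde w_{nij})$. A rank/Hilbert--Schmidt comparison (bounding $\|M_n-\tilde M_n\|_{HS}^2/n$ in $L^1$ by the Lindeberg sum) shows that the Lévy distance between the two ESDs tends to zero almost surely, so it suffices to prove the semicircle law for $\tilde M_n$, whose entries are bounded by $2\eps\sqrt n$ and have variance close to one (with the same first two moments asymptotically).

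\medskip

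The second and combinatorial heart of the argument is the moment computation
\[
\E\Bigl[\tfrac{1}{n}\Tr(\tilde M_n^k)\Bigr] = \frac{1}{n^{1+k/2}} \sum_{i_1,\dots,i_k} \E[\tilde w_{ni_1i_2}\tilde w_{ni_2i_3}\cdots \tilde w_{ni_ki_1}].
\]
Each index sequence corresponds to a closed walk on $\{1,\dots,n\}$; by independence and zero mean, only walks in which every edge of the underlying multigraph is traversed an even number of times survive. Standard bookkeeping shows that among such walks the dominant contribution comes from those traversing exactly $k/2$ distinct edges, each exactly twice, with the edges forming a tree; these are in bijection with non-crossing pair partitions of $\{1,\dots,k\}$, counted by the Catalan number $C_{k/2}$, which is precisely the $k$th moment of the semicircle law (and the $k$th moment vanishes for odd $k$). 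The crucial quantitative input from the truncation is that, for each surviving walk, $\E|\tilde w_{nij}|^{2m}\le (2\eps\sqrt n)^{2m-2}$, which keeps the non-tree contributions at lower order in $n$. I expect this combinatorial expansion, and in particular verifying that all non-semicircular walk types contribute $o(1)$ once truncation is in force, to be the main technical obstacle.

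\medskip

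Finally, for almost sure convergence I would estimate $\V[\tfrac{1}{n}\Tr(\tilde M_n^k)]$ by expanding a double trace as a sum over pairs of closed walks and checking that the only pairs whose joint moment does not factorize contribute $O(1/n^2)$, so that $\sum_n \V[\tfrac{1}{n}\Tr(\tilde M_n^k)] < \infty$. Together with the moment convergence and Borel--Cantelli this yields almost sure convergence of every moment, hence almost sure weak convergence of the ESD to the semicircle law $F$. The extension from polynomials to continuous bounded $f$ follows by a truncation outside $[-2-\delta,2+\delta]$ together with a Weierstrass approximation on the compact part, using the easy a priori bound $\frac{1}{n}\sum_i \lambda_i(\tilde M_n)^2 = \frac{1}{n^2}\sum_{i,j}\tilde w_{nij}^2 = O(1)$ almost surely to control the tails.
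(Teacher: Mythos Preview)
The paper does not give its own proof of this theorem: it is quoted as a known result from \cite[Theorem 2.9]{BSbook} (Bai--Silverstein), so there is no ``paper's proof'' to compare against beyond that citation. Your plan is precisely the classical moment-method argument that appears in that reference (truncate using the Lindeberg condition, compute $\E[\frac{1}{n}\Tr \tilde M_n^k]$ via closed-walk combinatorics to get Catalan numbers, and use a summable variance bound with Borel--Cantelli to upgrade to almost sure convergence), so in substance your approach and the cited one coincide.

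Two small technical points to tighten when you write it out. First, controlling $\|M_n-\tilde M_n\|_{HS}^2/n$ in $L^1$ by the Lindeberg sum gives convergence in probability, not almost sure convergence, of the L\'evy distance; the usual fix is to replace the fixed $\eps$ by a sequence $\eps_n\to 0$ chosen slowly enough that the Lindeberg condition still forces the truncated tail sums to be summable (or to combine a rank inequality for the truncation step with the Hoffman--Wielandt bound for the recentering step, as in Bai--Silverstein). Second, once almost sure weak convergence of the ESD to $F$ is established, the statement for continuous bounded $f$ is immediate from the definition of weak convergence; no separate Weierstrass/tightness argument is needed there (your second-moment tightness bound is already what lets moment convergence imply weak convergence).
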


The sample covariance case was studied by Marchenko and Pastur \cite{MP}.  In particular, they showed that, under certain conditions, the ESD of a sample covariance random matrix converges to $F_{\mathrm{MP}}$ with density given by
\begin{equation} \label{eq:def:mp}
	\rho_{\mathrm{MP}}(x) := \left\{
     		\begin{array}{lr}
       		\frac{1}{2\pi} \sqrt{ \frac{4-x}{x}}, & 0 < x < 4 \\
       		0, &  \text{otherwise}.
     		\end{array}
   \right. 
\end{equation}

The Marchenko-Pastur law is the limiting ESD for a large class of sample covariance random matrices \cite[Theorem 3.10]{BSbook}.

\begin{theorem}[Marchenko-Pastur law] \label{thm:mplaw}
For each $n \geq 1$, let $A_n = \frac{1}{n} X_n^\ast X_n$ be a real (complex) sample covariance of size $n$, where $X_n = (x_{nij})_{1 \leq i,j \leq n}$.  Assume that for any $\varepsilon > 0$, 
$$ \lim_{n \rightarrow \infty} \frac{1}{n^2} \sum_{i,j=1}^n \E|x_{nij}|^2 \indicator{|x_{nij}| \geq \varepsilon \sqrt{n}} = 0. $$
Then the ESD of $A_n$ converges to the Marchenko-Pastur law $F_{\mathrm{MP}}$ almost surely as $n \rightarrow \infty$.  Equivalently, for any continuous, bounded function $f$,
$$ \frac{1}{n} \sum_{i=1}^n f(\lambda_i(A_n)) \longrightarrow \int_{-\infty}^\infty f(x) \rho_{\mathrm{MP}}(x) dx $$
almost surely as $n \rightarrow \infty$.  
\end{theorem}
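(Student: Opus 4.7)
The plan is to prove convergence of the ESD to $F_{\mathrm{MP}}$ by the method of moments, verifying that for each fixed integer $k \ge 1$,
\[
m_k(A_n) := \frac{1}{n}\sum_{i=1}^n \lambda_i(A_n)^k = \frac{1}{n}\Tr(A_n^k) \longrightarrow C_k := \int_0^4 x^k \rho_{\mathrm{MP}}(x)\,dx
\]
almost surely. Since $F_{\mathrm{MP}}$ is compactly supported, it is uniquely determined by its moments (Carleman's condition), and convergence of all moments of a sequence of probability measures to those of a uniquely determined compactly supported target implies weak convergence; together with almost sure convergence for each $k$, this will yield the claim for continuous bounded test functions.

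\textbf{Truncation.} The Lindeberg-type hypothesis is handled by a standard two-step truncation/centering. For fixed $\eps>0$ set $\tilde x_{nij} = x_{nij}\mathbf{1}_{\{|x_{nij}|\le \eps\sqrt n\}} - \E[x_{nij}\mathbf{1}_{\{|x_{nij}|\le \eps\sqrt n\}}]$ and let $\tilde A_n = \tfrac{1}{n}\tilde X_n^\ast \tilde X_n$. Using the rank inequality for the Kolmogorov distance between ESDs of $n\times n$ Hermitian matrices (bounded by rank/$n$) together with the Hoffman--Wielandt inequality (or Bai's trace--distance bound), one shows $F^{A_n} - F^{\tilde A_n}\to 0$ in the Lévy metric almost surely; the Lindeberg hypothesis is exactly what makes the total truncation mass negligible in trace norm. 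A further rescaling to restore unit variance changes entries by $1+o(1)$, hence changes the ESD by $o(1)$ in Lévy distance. So it suffices to treat the case of uniformly bounded entries $|x_{nij}|\le \eps\sqrt n$ with mean $0$ and variance $1+o(1)$.

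\textbf{Expected moments.} Writing out the trace,
\[
\E\,\Tr(A_n^k) = \frac{1}{n^k}\sum_{i_1,\dots,i_k;\,j_1,\dots,j_k} \E\bigl[ x_{i_1 j_1}\overline{x_{i_2 j_1}} x_{i_2 j_2}\overline{x_{i_3 j_2}}\cdots x_{i_k j_k}\overline{x_{i_1 j_k}}\bigr].
\]
Encode each index sequence as a closed walk of length $2k$ on the complete bipartite graph with vertex classes $\{i\text{'s}\}$ and $\{j\text{'s}\}$, alternating between the two. Because entries are independent with zero mean, a walk contributes only if every edge is traversed an even number of times; the zero-mean plus $\E[x^2]=1$ assumption forces exactly twofold traversal to dominate, while terms in which some edge appears $\ge 3$ times are $O(n^{-1})$ after using the truncation bound $|x|\le \eps\sqrt n$ to control higher moments. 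The dominant contribution therefore comes from non-crossing pair partitions of the $2k$ edges, which, together with the bipartite structure, is counted (after accounting for the $n^{k+1}/n^k$ index choices and normalization by $1/n$) by the $k$-th Catalan number $C_k$. A direct check shows that $C_k = \int_0^4 x^k \rho_{\mathrm{MP}}(x)\,dx$, so $\E\,m_k(A_n)\to C_k$.

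\textbf{Concentration and conclusion.} Almost sure convergence $m_k(A_n)-\E\,m_k(A_n)\to 0$ follows from a variance bound $\var(m_k(A_n)) = O(n^{-2})$, which is proved by the analogous moment expansion: pairs of walks contributing to the variance must share a common edge and therefore involve at most $2k$ distinct vertices rather than $2k+1$, gaining a factor $n^{-1}$ over the first-moment count. Then Chebyshev combined with Borel--Cantelli yields a.s.\ convergence along $\N$. The main obstacle is the combinatorial bookkeeping in the moment calculation, namely the precise identification of the non-crossing bipartite pair partitions and the verification that all other topologies (edges of higher multiplicity, crossings, extra vertex identifications) are suppressed by at least one factor of $1/n$; this is where the truncation hypothesis is used to tame moments of order higher than two. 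Once the moment convergence and concentration are in hand, Carleman's condition and the standard equivalence between weak convergence of distribution functions and convergence against bounded continuous $f$ complete the proof.
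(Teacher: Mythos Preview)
The paper does not prove this theorem at all; it is quoted as a known result, with the attribution ``\cite[Theorem 3.10]{BSbook}'' to Bai--Silverstein's monograph. So there is no in-paper proof to compare against.

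Your sketch is the classical moment-method route and is essentially correct as an outline: the truncation/centering step, the bipartite closed-walk expansion of $\E\Tr(A_n^k)$ leading to Catalan numbers, and the $O(n^{-2})$ variance bound plus Borel--Cantelli are all standard and valid. A couple of remarks on the details: the claim that $\var(m_k(A_n))=O(n^{-2})$ needs a bit more than ``pairs of walks must share an edge''; the shared-edge walks give $O(n^{-1})$, and to get the extra factor of $n^{-1}$ you must further use that for the covariance the two walks are connected through at least one vertex, reducing the number of free vertices by one more. Also, in the truncation step for sample covariance matrices the rank/Hoffman--Wielandt argument has to be applied to $\tfrac{1}{n}X^\ast X$ rather than $X$ directly, which introduces some extra bookkeeping (one compares $\tfrac{1}{n}X^\ast X$ and $\tfrac{1}{n}\tilde X^\ast \tilde X$ via the difference inequality for singular values and the trace bound $\tfrac{1}{n}\Tr(X-\tilde X)^\ast(X-\tilde X)$); this is routine but worth flagging.

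For contrast, the cited source (Bai--Silverstein, Chapter~3) carries out the proof via the Stieltjes transform rather than moments: one shows that the Stieltjes transform $s_n(z)=\tfrac{1}{n}\Tr(A_n-zI)^{-1}$ satisfies an approximate self-consistent equation whose unique solution is the Stieltjes transform of $F_{\mathrm{MP}}$, and then invokes the inversion formula. That approach avoids the combinatorics of path counting and extends more readily to rectangular matrices and general population covariances, whereas your moment method is more elementary and self-contained in the square, identity-covariance case treated here.
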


Theorems \ref{thm:sclaw} and \ref{thm:mplaw} can be viewed as random matrix theory analogues of the Law of Large Numbers from classical probability theory. Thus a Central Limit Theorem for fluctuations of linear eigenvalue statistics is a natural next step.  

In \cite{S}, Shcherbina studies the fluctuations of linear eigenvalue statistics for both Wigner and sample covariance random matrices.  In particular, she considers test functions $f$ from the space $\mathcal{H}_s$ with the norm
$$ \|f\|^2_s := \int (1 + 2|l|)^{2s}|\hat{f}(l)|^2 dl $$
for $s > 3/2$, where $\hat{f}$ is the Fourier transform of $f$ defined by
$$ \hat{f}(l) := \frac{1}{\sqrt{2 \pi}} \int e^{ilx} f(x) dx. $$
We note that if $f$ is a real-valued function with $f \in \mathcal{H}_s$ for $s > 3/2$, then both $f$ and $f'$ are continuous and bounded almost everywhere \cite{HN}.  In particular, this implies that $f$ is Lipschitz.  

Shcherbina obtains the following two results \cite[Theorems 1 and 2]{S}.  

\begin{theorem}[Linear eigenvalue statistics for Wigner matrices; \cite{S}] \label{thm:wigner:linear}
For each $n \geq 1$, let $M_n = \frac{1}{\sqrt{n}} W_n = \frac{1}{\sqrt{n}} (w_{nij})_{1 \leq i,j \leq n}$ be a real symmetric Wigner matrix of size $n$.  Suppose $\E[w_{nij}^4] = m_4$ for all $1 \leq i < j \leq n$ and all $n \geq 1$.  Assume for any $\varepsilon > 0$,
$$ \lim_{n \rightarrow \infty} \left( \frac{1}{n} \sum_{i = 1}^n \E|w_{nii}|^2 \indicator{|w_{nii}| \geq \varepsilon \sqrt{n}} + \frac{1}{n^2} \sum_{1 \leq i < j \leq n} \E |w_{nij}|^4 \indicator{|w_{nij}| \geq \varepsilon \sqrt{n}} \right) = 0. $$
Let $f$ be a real-valued function with $\|f\|_s < \infty$ for some $s > 3/2$.  Then
$$ \sum_{i=1}^n f(\lambda_i(M_n)) - \E \sum_{i=1}^n f(\lambda_i(M_n)) \longrightarrow N(0,v^2[f]) $$
in distribution as $n \rightarrow \infty$, where
\begin{align} \label{eq:def:v2f}
	v^2[f] &:= \frac{1}{2\pi^2} \int_{-2}^2 \int_{-2}^2 \left( \frac{f(x) - f(y)}{x-y} \right)^2 \frac{ 4-xy}{\sqrt{4-x^2}\sqrt{4-y^2} } dx dy \\
	&\qquad + \frac{m_4 - 3}{2 \pi^2} \left( \int_{-2}^2 f(x) \frac{2 - x^2}{\sqrt{4-x^2}} dx \right)^2 + \frac{\sigma^2 - 2}{4 \pi^2} \left( \int_{-2}^2 \frac{f(x) x}{\sqrt{4 - x^2}} dx  \right)^2. \nonumber
\end{align}
\end{theorem}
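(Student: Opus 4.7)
The plan is to prove convergence of characteristic functions. Set $\mathcal{N}_n^\circ[f] := \sum_{i=1}^n f(\lambda_i(M_n)) - \E \sum_{i=1}^n f(\lambda_i(M_n))$ and $Z_n(x) := \E \exp(ix \mathcal{N}_n^\circ[f])$; by L\'evy continuity it suffices to show, for each fixed $x \in \R$, that $Z_n(x) \to \exp(-x^2 v^2[f]/2)$. A preliminary step truncates the entries at level $\eps_n \sqrt{n}$ with $\eps_n \to 0$; the Lindeberg-type hypothesis of the theorem permits this with the first two moments, the off-diagonal fourth moment $m_4$, and the diagonal variance $\sigma^2$ essentially unchanged, while the truncation error in $\mathcal{N}_n[f]$ is $o(1)$ in probability because $f$ is Lipschitz (a consequence of $\|f\|_s < \infty$ for $s > 3/2$).

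Expand the statistic using the Fourier representation
\[
\mathcal{N}_n^\circ[f] = \frac{1}{\sqrt{2\pi}} \int_{\R} \hat f(t)\, U_n^\circ(t)\,dt, \qquad U_n^\circ(t) := \Tr e^{itM_n} - \E \Tr e^{itM_n}.
\]
Differentiating $Z_n$ in $x$ reduces the problem to asymptotic evaluation of $Y_n(t,x) := \E[U_n^\circ(t)\exp(ix\mathcal{N}_n^\circ[f])]$. Using $\tfrac{d}{dt} e^{itM_n} = i M_n e^{itM_n}$ and writing $\Tr(M_n e^{isM_n}) = n^{-1/2}\sum_{\alpha,\beta} w_{n\alpha\beta} (e^{isM_n})_{\beta\alpha}$, one treats each entry by the generalized Stein / cumulant expansion
\[
\E\bigl[w_{n\alpha\beta}\,\Phi(W_n)\bigr] = \sum_{p=1}^{P} \frac{\kappa_{p+1}(w_{n\alpha\beta})}{p!}\, \E\!\left[\frac{\d^p \Phi}{\d w_{n\alpha\beta}^p}\right] + R_P,
\]
truncated at $P = 3$, so that only the first four cumulants of the entries enter the leading terms.

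The $p=1$ off-diagonal contributions, combined with quantitative versions of the semicircle law (Theorem~\ref{thm:sclaw}) applied to traces of products of matrix exponentials, yield after Fourier inversion the universal double integral in the first line of \eqref{eq:def:v2f}. The $p=1$ diagonal contributions produce the $(\sigma^2-2)$ term, since $\kappa_2 = \sigma^2$ on the diagonal departs from the GOE value $2$. The $p=3$ terms, absent in the Gaussian case, carry $\kappa_4(w_{n\alpha\beta}) = m_4 - 3$ and deliver the $(m_4-3)$ correction. The $p=2$ contributions vanish asymptotically by the parity symmetries of the relevant matrix-integral expressions (the symmetrization produced by the cutoff distribution makes this clean). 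Collecting everything one obtains the asymptotic differential relation $Z_n'(x) = -x\, v^2[f]\, Z_n(x) + o(1)$ uniformly on compact $x$-intervals, which together with $Z_n(0) = 1$ forces the Gaussian limit.

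The main obstacle is closing the error estimates in the cumulant expansion uniformly in $t \in \R$, with enough decay in $|t|$ that the integral against $\hat f(t)$ is convergent. Naive bounds on the entrywise derivatives of $e^{itM_n}$ grow polynomially in $|t|$, and the regularity $\|f\|_s < \infty$ with $s > 3/2$ is precisely what absorbs this growth, since Cauchy--Schwarz against $\|f\|_s$ gives $\int (1+|t|)\,|\hat f(t)|\,dt < \infty$. A secondary obstacle is establishing the quantitative concentration of traces of the form $\Tr(e^{is_1 M_n} \cdots e^{is_k M_n})$ near their semicircle limits at the required precision; this is a refined semicircle law in the matrix-exponential variables, and it is where the moment hypotheses on the $w_{nij}$ are invoked repeatedly in the resolvent/cumulant bookkeeping.
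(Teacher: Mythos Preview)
This theorem is not proved in the paper. It appears in the ``Known Results'' subsection and is quoted as \cite[Theorem~1]{S}, a result of Shcherbina; the paper then invokes it as a black box in the proofs of Theorems~\ref{thm:wigner:finite} and~\ref{thm:wigner:infty}. There is consequently no proof in the present paper to compare your proposal against.

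That said, your outline is a reasonable high-level description of the method in the cited literature (\cite{S} and its predecessor Lytova--Pastur \cite{LP}): characteristic-function argument, Fourier representation $\mathcal{N}_n^\circ[f] = (2\pi)^{-1/2}\int \hat f(t)\,U_n^\circ(t)\,dt$, differentiation in $x$, a cumulant (generalized Stein) expansion in the matrix entries, and identification of the three pieces of $v^2[f]$ with the off-diagonal second cumulant, the diagonal second cumulant (carrying $\sigma^2-2$), and the off-diagonal fourth cumulant (carrying $m_4-3$). One point where Shcherbina's actual proof departs from your sketch: to reach the weak regularity $s>3/2$ under only a Lindeberg-type fourth-moment hypothesis, she does not control the $t$-integral remainders of the cumulant expansion directly as you propose, but first establishes a uniform variance bound $\var\,\mathcal{N}_n^\circ[f] \le C\|f\|_s^2$ and then interpolates between $M_n$ and the Gaussian ensemble, so that the explicit variance computation is effectively carried out only in the Gaussian case.
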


\begin{theorem}[Linear eigenvalue statistics for sample covariance matrices; \cite{S}] \label{thm:sc:linear}
For each $n \geq 1$, let $A_n = \frac{1}{n} X_n^\ast X_n$ be a real sample covariance matrix of size $n$, where $X_n = (x_{nij})_{1 \leq i,j \leq n}$.  Suppose $\E[x_{nij}^4] = m_4$ for all $1 \leq i, j \leq n$ and all $n \geq 1$.  Assume there exists $\varepsilon > 0$ such that
$$ \sup_{n \geq 1} \sup_{1 \leq i,j \leq n} \E|x_{nij}|^{4 + \eps} < \infty. $$
Let $f$ be a real-valued function with $\|f\|_s < \infty$ for some $s > 3/2$.  Then
$$ \sum_{i=1}^n f(\lambda_i(A_n)) - \E \sum_{i=1}^n f(\lambda_i(A_n)) \longrightarrow N(0,v_{\mathrm{SC}}^2[f]) $$
in distribution as $n \rightarrow \infty$, where
\begin{align} \label{eq:def:vsc2f}
	v_{\mathrm{SC}}^2[f] &:= \frac{1}{2 \pi^2} \int_{0}^4 \int_0^4 \left( \frac{ f(x) - f(y)}{x-y} \right)^2 \frac{ \left( 4 - (x-2)(y-2) \right) }{ \sqrt{ 4 - (x-2)^2} \sqrt{4 - (y-2)^2}} dx dy \\
	& \qquad + \frac{m_4 - 3}{4 \pi^2} \left( \int_0^4 \frac{x - 2}{\sqrt{4 - (x-2)^2}} dx \right)^2. \nonumber
\end{align}

\end{theorem}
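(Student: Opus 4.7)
The plan is to adapt Shcherbina's argument for Theorem \ref{thm:wigner:linear} to the sample covariance ensemble, the principal differences being the matrix structure $A_n = X_n^\ast X_n / n$, the presence of a single quartic-cumulant term rather than two, and the change of spectral support from $[-2,2]$ to $[0,4]$. The first step is a standard truncation and recentering of the entries $x_{nij}$ at level $n^{1/2-\eps/2}$, using the bound $\sup_{n,i,j} \E|x_{nij}|^{4+\eps} < \infty$ together with the rank inequality $|\Tr f(A_n) - \Tr f(\tilde A_n)| \leq 2\|f\|_\infty \operatorname{rank}(A_n - \tilde A_n)$ on the untruncated set, and a Lipschitz/Hoffman-Wielandt estimate on its complement; the outcome is that I may assume $|x_{nij}|\leq n^{1/2-\eps/2}$, unit variance, and $\E\tilde x_{nij}^4 \to m_4$.

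Next I would pass from $f$ to exponentials via its Fourier transform. Since $f \in \mathcal H_s$ with $s>3/2$,
$$ f(\lambda) = \frac{1}{\sqrt{2\pi}} \int e^{-it\lambda}\,\hat f(t)\, dt, \qquad Y_n[f] = \frac{1}{\sqrt{2\pi}} \int \hat f(t)\, Y_n(t)\, dt, $$
where $Y_n(t) := \Tr e^{-itA_n} - \E\Tr e^{-itA_n}$ and $Y_n[f] := \sum_i f(\lambda_i(A_n)) - \E\sum_i f(\lambda_i(A_n))$. The weight $(1+2|t|)^{2s}$ with $s>3/2$ provides just enough integrability in $t$ to interchange expectation and the Fourier integral, and to convert convergence of covariances of $Y_n(t)$ into convergence of $\var(Y_n[f])$.

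The core of the argument is then an analysis of the characteristic function $Z_n(x) := \E\exp\{ixY_n[f]\}$, for which one aims to derive an approximate ODE $Z_n'(x) = -x\,v_{\mathrm{SC}}^2[f]\,Z_n(x) + o(1)$ uniformly on compact $x$-intervals; integration then yields the Gaussian limit. Differentiation in $x$ brings down a factor $iY_n[f]$, and the Fourier representation reduces the task to computing the covariance kernel $K_n(t_1,t_2) := \cov(\Tr e^{-it_1A_n},\Tr e^{-it_2A_n})$. Writing each exponential as a contour integral of the resolvent $G(z) = (A_n - z)^{-1}$ reduces matters further to $\cov(\Tr G(z_1),\Tr G(z_2))$, which is evaluated by the integration-by-parts (Stein) identity with higher-cumulant corrections adapted to the product structure: each $x_{nij}$ enters $X_n$ linearly, so $\partial_{x_{nij}} G$ produces rank-one terms whose traces are tractable. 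The second-order term of the cumulant expansion, combined with the Marchenko-Pastur fixed-point equation for $g(z) = \int (x-z)^{-1} \rho_{\mathrm{MP}}(x)\,dx$, yields a closed linear equation whose solution produces, after the change of variables $x = 2+2\cos\theta$ diagonalizing the Chebyshev structure on $[0,4]$, the double-integral term in $v_{\mathrm{SC}}^2[f]$. The surviving fourth-cumulant piece, proportional to $m_4 - 3$, contributes the second term.

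The main obstacle I anticipate is the uniform control of $\cov(\Tr G(z_1),\Tr G(z_2))$ with remainders sharp enough that, after integration against $\hat f(t_1)\hat f(t_2)$, they vanish in the limit; this is where the Sobolev weight $s>3/2$ is used delicately to absorb polynomial factors in $t_1,t_2$ produced by the resolvent bound $\|G(z)\|\leq|\Im z|^{-1}$ and the contraction of the contours toward the real axis. A secondary difficulty is the hard-edge behaviour of $\rho_{\mathrm{MP}}$ at $0$, which must be handled by analytic continuation of the relevant equations into a complex neighbourhood of $[0,4]$ and verification that the boundary contributions in the Chebyshev integral cancel. Once these analytic estimates are in place, Gaussianity follows formally from the ODE for $Z_n(x)$, and the explicit formula \eqref{eq:def:vsc2f} emerges from the Chebyshev computation.
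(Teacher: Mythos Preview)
The paper does not prove this theorem: it is stated in the ``Known Results'' subsection and attributed to Shcherbina \cite[Theorem~2]{S}, then used as a black box in the proofs of Theorems~\ref{thm:sc:finite} and~\ref{thm:sc:infty}. So there is no ``paper's own proof'' to compare against.

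That said, your sketch is a reasonable summary of Shcherbina's actual argument in \cite{S}: truncation to bounded entries, Fourier representation of $f$ to reduce to the analysis of $Y_n(t)=\Tr e^{-itA_n}-\E\Tr e^{-itA_n}$, the Duhamel/resolvent-based computation of $\cov(Y_n(t_1),Y_n(t_2))$ via integration by parts with cumulant remainders, and then the approximate first-order ODE for the characteristic function $Z_n(x)$. Two minor corrections to your outline. First, the truncation step in this setting is not handled by a rank inequality (truncation typically alters all entries simultaneously, so the rank of the difference is $n$); rather one shows $\Prob(X_n\neq \tilde X_n)\to 0$ and uses a variance bound on the linear statistic for the recentering/rescaling correction. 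Second, the hard edge at $0$ is not really an issue here because the covariance kernel is computed through the Stieltjes transform $g(z)$, which is analytic off $[0,4]$ and satisfies the same quadratic fixed-point equation uniformly; the Chebyshev substitution $x=2+2\cos\theta$ then handles both endpoints symmetrically. The genuinely delicate point you correctly identify is absorbing the polynomial-in-$t$ error terms using only the $\mathcal{H}_s$, $s>3/2$, weight; this is where Shcherbina's bound $\var Y_n[f]\leq C\|f\|_s^2$ (her Proposition~2) does the work, and your sketch would benefit from isolating that a priori variance estimate as a separate lemma before attacking the ODE.
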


Analogous results for other random matrix ensembles (and other classes of test functions $f$) have also been obtained; see for example 
\cite{AZ,BS,DS,DE,J,LP,S,SS,Sclt,SoWa} and references therein.

\subsection{Main Results}

One can observe from Theorems \ref{thm:wigner:linear} and \ref{thm:sc:linear} that the variance of the linear eigenvalue statistics does not grow to infinity in the limit $n \to \infty$ for sufficiently smooth test functions.  This points to very effective cancellations between different terms of the sum and a rigidity property for the distribution of the eigenvalues.  

In \cite{J88}, K. Johansson 
considered, among other things, linear statistics 
\begin{equation}
\label{lin}
S_n=\sum_{j=1}^n g(\theta_j)
\end{equation}
of the eigenvalues $e^{i\*\theta_1}, \ldots, e^{i\*\theta_n}$ of a 
random $n\times n$ unitary matrix distributed according to Haar measure on $U(n).$  He proved the 
CLT for such linear statistics under the optimal 
condition $$\sum_{-\infty}^{\infty} |k|\*|\hat{g}_k|^2 <\infty, $$ 
where $\hat{g}_k$ are the Fourier coefficients of $g,$
and also connected the CLT result to the Szeg\"o asymptotic formula for Toeplitz determinants (see e.g. \cite{GI, Ka, Sz}).  
Johansson's proof relies on elaborate cancellations.  In particular, it works with minor modifications 
for more general $\beta$ ensembles, $\beta>0$
(the Haar measure case corresponding to $\beta=2$).

In remark 2.1 of \cite{J88}, Johansson noted that for $ S_{n,1}= \sin \theta_1+ \ldots \sin \theta_{n-1}, $ the sum of the first $n-1$ terms in 
(\ref{lin}) with $g(\theta)=\sin \theta,$ the distribution of the normalized statistic
$$ \frac{S_{n,1}-\E S_{n,1}}{\sqrt{\var S_{n,1}}}$$
does not converge to the standard normal distribution as $n\to \infty.$  The argument relies on the CLT 
for $S_n$ and the following facts: $\var S_n= 1/2, \ 
\var S_{n,1}\to 1, $ as $n\to \infty,$ and $|S_n-S_{n,1}|\leq 1.$

In this paper, we study the fluctuations of the partial linear eigenvalue statistics 
\begin{equation} \label{eq:def:snk}
	S_{n,k}[f] := \sum_{i=1}^{n-k} f(\mu_i(M_n)) 
\end{equation}
where $k = k(n)$ is a positive integer sequence, $f$ is a sufficiently nice test function from $\mathcal{H}_s, s > 3/2$, and  
$\{M_n\}_{n \geq 1}$ is a sequence of Wigner matrices that satisfy condition {\bf C0}.

\begin{theorem} \label{thm:wigner:finite}
For each $n \geq 1$, let $M_n = \frac{1}{\sqrt{n}} W_n = \frac{1}{\sqrt{n}} (w_{nij})_{1 \leq i,j \leq n}$ be a real symmetric Wigner matrix of size $n$.  Assume the sequence $\{M_n\}_{n \geq 1}$ satisfies condition {\bf C0} with exponent $0$ and suppose $\E[w_{nij}^4] = m_4$ for all $1 \leq i < j \leq n$ and all $n \geq 1$.  Let $f$ be a real-valued, bounded Lipschitz function with $\|f\|_s < \infty$ for some $s > 3/2$.  Let $k$ be a fixed positive integer and let $S_{n,k}[f]$ be defined by \eqref{eq:def:snk}.  Then
$$ S_{n,k}[f] - \E S_{n,k}[f]  \longrightarrow N(0,v^2[f]) * \left[ - \sum_{i=1}^k \left[ f(\psi_i) - \E f(\psi_i) \right] \right], $$
in distribution as $n \rightarrow \infty$, where $\psi_1, \ldots, \psi_k$ are i.i.d. semicircle-distributed random variables and $v^2[f]$ is given in \eqref{eq:def:v2f}.  
\end{theorem}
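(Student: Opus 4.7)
The plan is to decompose $S_{n,k}[f]$ as the full linear statistic minus a ``removal'' term built from $k$ uniformly-chosen eigenvalues, then show the two parts converge jointly to independent limits. Write
\begin{equation*}
	S_{n,k}[f] = T_n[f] - R_{n,k}[f], \quad T_n[f] := \sum_{i=1}^n f(\lambda_i(M_n)), \quad R_{n,k}[f] := \sum_{i=n-k+1}^n f(\mu_i(M_n)).
\end{equation*}
Since $\mu_i(M_n)=\lambda_{\pi(i)}(M_n)$ with $\pi$ a uniform random permutation independent of $M_n$, the multiset $\{\mu_{n-k+1}(M_n),\ldots,\mu_n(M_n)\}$ is a uniform size-$k$ subset (without replacement) of the eigenvalues of $M_n$. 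Set $Y_n := T_n[f]-\E T_n[f]$ and $V_n := R_{n,k}[f]-\E R_{n,k}[f]$; my target is the joint convergence $(Y_n,V_n)\to(Y,V)$ with $Y\sim N(0,v^2[f])$ and $V=\sum_{j=1}^k (f(\psi_j)-\E f(\psi_j))$ for i.i.d.\ semicircle variables $\psi_j$, and with $Y,V$ \emph{independent}, from which the theorem follows by the continuous mapping theorem applied to $(y,v)\mapsto y-v$.

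Condition \textbf{C0} with exponent $0$ implies the Lindeberg-type hypothesis of Theorem~\ref{thm:wigner:linear}, so Shcherbina's CLT gives $Y_n\to N(0,v^2[f])$ in distribution. For $V_n$, I will compute the conditional characteristic function. Because $\pi$ is independent of $M_n$,
\begin{equation*}
	\phi_n(t) := \E[e^{itV_n}\mid M_n] = e^{-itk\bar f_n}\cdot\frac{1}{n^{(k)}}\sum_{\substack{i_1,\ldots,i_k \\ \text{distinct}}}\prod_{j=1}^k e^{itf(\lambda_{i_j}(M_n))},
\end{equation*}
where $\bar f_n := n^{-1}\E T_n[f]$ and $n^{(k)} := n(n-1)\cdots(n-k+1)$. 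For fixed $k$, the number of non-distinct $k$-tuples is $O(n^{k-1})$, so (using $|e^{itf}|\equiv 1$) the restricted average differs from $\bigl(\tfrac{1}{n}\sum_i e^{itf(\lambda_i(M_n))}\bigr)^k$ by $O(1/n)$. Applying Theorem~\ref{thm:sclaw} to the bounded continuous function $e^{itf(\cdot)}$ gives $\tfrac{1}{n}\sum_i e^{itf(\lambda_i(M_n))}\to\int e^{itf}\rho$ almost surely, while $\bar f_n\to\int f\rho$ by bounded convergence (applied to $\tfrac{1}{n}\E T_n[f]$ using boundedness of $f$). Hence $\phi_n(t)\to\phi(t):=\bigl(e^{-it\int f\rho}\int e^{itf}\rho\bigr)^k$ almost surely, which is exactly the characteristic function of the target $V$.

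To combine these, observe that $Y_n$ is $\sigma(M_n)$-measurable, so
\begin{equation*}
	\E e^{isY_n+itV_n} = \E\bigl[e^{isY_n}\phi_n(t)\bigr] = \phi(t)\,\E e^{isY_n} + \E\bigl[e^{isY_n}(\phi_n(t)-\phi(t))\bigr].
\end{equation*}
The first term tends to $\phi(t)e^{-s^2v^2[f]/2}$ by Shcherbina, and the second is bounded in modulus by $\E|\phi_n(t)-\phi(t)|\to 0$ (bounded convergence, since $|\phi_n|,|\phi|\le 1$). This is precisely the joint characteristic function of independent $(Y,V)$, so $(Y_n,V_n)\to(Y,V)$ jointly in distribution and therefore $S_{n,k}[f]-\E S_{n,k}[f]=Y_n-V_n\to Y-V$ in distribution, establishing the claim.

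The only genuinely technical point is the convergence of $\phi_n(t)$, and this is not really an obstacle because $k$ is fixed: the ``sampling without replacement versus with replacement'' mismatch contributes only $O(1/n)$, and Wigner's semicircle law handles the remaining deterministic convergence. The same approach breaks down once $k\to\infty$, which is presumably why the companion regime $\min\{k,n-k\}\to\infty$ mentioned in the abstract requires a separate treatment: the i.i.d.-sampling approximation fails and one must track the interaction between the Gaussian piece coming from $T_n[f]$ and the increasingly large removal term.
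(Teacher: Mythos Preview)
Your proof is correct and takes a genuinely different route from the paper's. The paper invokes the eigenvalue rigidity estimate (Theorem~\ref{thm:wigner:rigidity}) to replace the random eigenvalues $f(\lambda_{\xi_j}(M_n))$ by $f(\eta_{\xi_j})$, where $\eta_j$ is the deterministic classical location; this makes the linear statistic $L_n[f]$ and the removal term \emph{literally} independent (one depends only on $M_n$, the other only on the sampling indices $\xi$), after which the convolution structure is immediate. You bypass rigidity entirely: by conditioning on $M_n$ and observing that the conditional characteristic function of the removal term converges, via the semicircle law applied to the bounded continuous test function $e^{itf}$, to a \emph{deterministic} limit, you obtain asymptotic independence directly from the factorization of the joint characteristic function. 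Your argument is more elementary for the fixed-$k$ case, needing only Theorems~\ref{thm:sclaw} and~\ref{thm:wigner:linear} as inputs, whereas the paper's route imports the substantial machinery behind rigidity. On the other hand, the rigidity approach is what the paper actually needs for the companion result (Theorem~\ref{thm:wigner:infty}) with $k\to\infty$, where one must control the expectation of the removal term to $o(\alpha_{n,k}^{-1})$ accuracy; your conditional-characteristic-function trick, as you note in your closing remark, does not extend to that regime without further quantitative input.
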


\begin{theorem} \label{thm:wigner:infty}
For each $n \geq 1$, let $M_n = \frac{1}{\sqrt{n}} W_n = \frac{1}{\sqrt{n}} (w_{nij})_{1 \leq i,j \leq n}$ be a real symmetric Wigner matrix of size $n$.  Assume the sequence $\{M_n\}_{n \geq 1}$ satisfies condition {\bf C0} with exponent $1$ and suppose $\E[w_{nij}^4] = m_4$ for all $1 \leq i < j \leq n$ and all $n \geq 1$.  Let $f$ be a real-valued, bounded Lipschitz function with $\|f\|_s < \infty$ for some $s > 3/2$.  Let $k = k(n)$ be a positive integer sequence such that $\min \{k,n-k \} \rightarrow \infty$ as $n \rightarrow \infty$.  Let $S_{n,k}[f]$ be defined by \eqref{eq:def:snk}.  Then 
$$ \alpha_{n,k} \left( S_{n,k}[f] - \E S_{n,k}[f] \right) \longrightarrow N(0,d^2[f]) $$
in distribution as $n \rightarrow \infty$, where 
\begin{equation} \label{def:alpha}
	\alpha_{n,k} := \sqrt{\frac{{n}}{{k(n-k)}} }
\end{equation}
and $d^2[f] := \var[f(\psi)]$ for a semicircle-distributed random variable $\psi$.  
\end{theorem}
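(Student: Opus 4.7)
My plan is to condition on the matrix $M_n$ and separate the randomness in $S_{n,k}[f]$ into two pieces: the uniform permutation $\pi$ (which selects which $n-k$ of the $n$ eigenvalues are summed) and the eigenvalues themselves. The scaling $\alpha_{n,k}$ is chosen so that the sampling fluctuations contribute the limiting Gaussian while the $O(1)$ fluctuations of the full linear statistic become negligible. Concretely, write $S_{n,k}[f]-\E S_{n,k}[f] = Y_n + Z_n$ with
$$Y_n := S_{n,k}[f] - \E[S_{n,k}[f]\mid M_n], \qquad Z_n := \E[S_{n,k}[f]\mid M_n] - \E S_{n,k}[f].$$
Since $\pi$ and $M_n$ are independent, $\E[S_{n,k}[f]\mid M_n] = \tfrac{n-k}{n}\Tr f(M_n)$, hence $\alpha_{n,k}Z_n = \sqrt{(n-k)/(kn)}\bigl(\Tr f(M_n) - \E\Tr f(M_n)\bigr)$. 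The prefactor is $O(k^{-1/2}) = o(1)$ while the centered linear statistic is tight by Theorem~\ref{thm:wigner:linear}, so $\alpha_{n,k}Z_n \to 0$ in probability.

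For $Y_n$, set $y_j := f(\lambda_j(M_n))$ and $\bar y := n^{-1}\sum_j y_j$. Then $Y_n = \sum_{i=1}^{n-k} y_{\pi(i)} - (n-k)\bar y$ is the centered sum of a simple random sample of size $n-k$ drawn without replacement from the (random) population $\{y_j\}_{j=1}^n$. The standard finite-population variance formula gives
$$\var(Y_n \mid M_n) = \frac{k(n-k)}{n-1}\widehat\sigma_n^2, \qquad \widehat\sigma_n^2 := \frac{1}{n}\sum_{j=1}^n (y_j-\bar y)^2,$$
so $\alpha_{n,k}^2\var(Y_n\mid M_n) = \frac{n}{n-1}\widehat\sigma_n^2 \to d^2[f]$ almost surely, by Theorem~\ref{thm:sclaw} applied to the bounded continuous functions $f$ and $f^2$. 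Assuming the non-degenerate case $d^2[f]>0$ (else $Y_n \to 0$ and the claim is trivial), boundedness of $f$ and $n\widehat\sigma_n^2 \to \infty$ yield the Noether--H\'ajek condition $\max_j (y_j-\bar y)^2/(n\widehat\sigma_n^2) \to 0$ almost surely. H\'ajek's central limit theorem for simple random samples without replacement, applied conditionally on $M_n$, then gives $\E[e^{it\alpha_{n,k} Y_n}\mid M_n] \to e^{-t^2 d^2[f]/2}$ almost surely, for each $t \in \R$.

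Finally, using independence of $\pi$ and $M_n$,
$$\E e^{it\alpha_{n,k}(Y_n+Z_n)} = \E\bigl[e^{it\alpha_{n,k}Z_n}\,\E[e^{it\alpha_{n,k}Y_n}\mid M_n]\bigr],$$
and dominated convergence (both factors are bounded by $1$) together with the two convergences above gives $\alpha_{n,k}(Y_n+Z_n) \to N(0,d^2[f])$ in distribution. The main technical point is the conditional H\'ajek CLT: the population $\{y_j\}$ is itself random, so one must verify that the Noether-type condition and the variance standardization stabilize on an event of probability one. Both reduce to boundedness of $f$ together with the almost sure semicircle law, so no eigenvalue-rigidity input is required.
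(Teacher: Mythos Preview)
Your argument is correct and follows a genuinely different route from the paper's. The paper writes $S_{n,k}[f]\stackrel{d}{=}L_n[f]-\sum_{j=1}^k f(\lambda_{\xi_j}(M_n))$ and then invokes the eigenvalue rigidity estimate (Theorem~\ref{thm:wigner:rigidity}) to replace each $\lambda_{\xi_j}(M_n)$ by its deterministic classical location $\eta_{\xi_j}$, thereby reducing to a CLT for sampling without replacement from the \emph{fixed} population $\{f(\eta_1),\ldots,f(\eta_n)\}$ (Lemma~\ref{lemma:depclt}). Condition {\bf C0} with exponent $1$ is used precisely to make the bad rigidity event have probability $o(n^{-1/2})$, so that the expectation replacement \eqref{eq:expank} survives multiplication by $\alpha_{n,k}\,k=O(\sqrt{n})$. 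You instead condition on $M_n$ and apply the H\'ajek--Erd\H{o}s--R\'enyi CLT directly to the \emph{random} population $\{f(\lambda_j(M_n))\}_{j=1}^n$; the only inputs are tightness of the centered full linear statistic (Theorem~\ref{thm:wigner:linear}) to kill $\alpha_{n,k}Z_n$, and the almost sure semicircle law (Theorem~\ref{thm:sclaw}) to stabilize $\widehat\sigma_n^2$ and verify the Noether condition. This bypasses rigidity entirely and in fact yields the conclusion under the weaker condition {\bf C0} with exponent $0$. The paper's approach has the virtue of treating Theorems~\ref{thm:wigner:finite} and~\ref{thm:wigner:infty} via a single deterministic sampling lemma once rigidity is in hand; your approach is shorter and sharper for this particular statement.
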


\begin{remark}
One can also study the case when $n-k$ is a fixed positive integer.  In the proof of Theorem \ref{thm:wigner:finite} below, we show that if $l$ is a fixed positive integer
$$ \sum_{i=1}^l f(\mu_i(M_n)) \longrightarrow \sum_{i=1}^l f(\psi_i) $$
in distribution as $n \rightarrow \infty$, where $\psi_1, \ldots, \psi_l$ are i.i.d. semicircle-distributed random variables.  
\end{remark}

It should be mentioned that a different type of partial linear eigenvalue statistic for Wigner matrices has been recently 
studied by Bao, Pan, and Zhou in \cite{BPZ}.   In particular, they consider $\sum_{i=1}^{k} f(\lambda_i(M_n)),$ 
where $\lambda_1(M_n)\leq \lambda_2(M_n)\ldots \leq \lambda_n(M_n)$ are the ordered eigenvalues of $M_n$ and $k$ is proportional to $n$.

Now we turn our attention to sample covariance random matrices.
Let $\{A_n\}_{n \geq 1}$ be a sequence of sample covariance matrices that satisfies condition {\bf C1}.  In this case, we consider the partial linear eigenvalue statistics
\begin{equation} \label{eq:def:tnk}
	T_{n,k}[f] := \sum_{i=1}^{n-k} f(\mu_i(A_n)).
\end{equation}

\begin{theorem} \label{thm:sc:finite}
For each $n \geq 1$, let $A_n = \frac{1}{n} X_n^\ast X_n$ be a real sample covariance matrix of size $n$ where $X_n = (x_{nij})_{1 \leq i,j \leq n}$.  Assume the sequence $\{A_n\}_{n \geq 1}$ satisfies condition {\bf C1} and suppose $\E[x_{nij}^4] = m_4$ for all $1 \leq i,j \leq n$ and all $n \geq 1$.  Let $f$ be a real-valued, bounded Lipschitz function with $\|f\|_s < \infty$ for some $s > 3/2$.  Let $k$ be a fixed positive integer and let $T_{n,k}[f]$ be given by \eqref{eq:def:tnk}.  Then
$$ T_{n,k}[f] - \E T_{n,k}[f]  \longrightarrow N(0,v_{\mathrm{SC}}^2[f]) * \left[ - \sum_{i=1}^k \left[ f(\psi_i) - \E f(\psi_i) \right] \right], $$
in distribution as $n \rightarrow \infty$, where $\psi_1, \ldots, \psi_k$ are i.i.d. Marchenko-Pastur distributed random variables and $v_{\mathrm{SC}}^2[f]$ is given by \eqref{eq:def:vsc2f}.  
\end{theorem}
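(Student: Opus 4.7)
The plan is to mirror the strategy used for Theorem \ref{thm:wigner:finite}, replacing the Wigner/semicircle inputs by their sample covariance/Marchenko--Pastur analogues. Decompose
$$ T_{n,k}[f] = L_n[f] - U_{n,k}[f], $$
where $L_n[f] := \sum_{i=1}^n f(\lambda_i(A_n))$ is the full linear statistic and $U_{n,k}[f] := \sum_{i=n-k+1}^n f(\mu_i(A_n))$ is the ``tail'' sum. Since $\mu_i(A_n) = \lambda_{\pi(i)}(A_n)$ for a uniformly random permutation $\pi$ independent of $A_n$, conditioning on $A_n$ makes the $k$ summands in $U_{n,k}[f]$ a size-$k$ sample without replacement from the empirical spectrum $\{\lambda_1(A_n),\ldots,\lambda_n(A_n)\}$.

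First I would apply Theorem \ref{thm:sc:linear} to $L_n[f]$. Condition {\bf C1} together with the identical fourth-moment assumption is much stronger than the $(4+\eps)$-moment hypothesis of that theorem, yielding
$$ L_n[f] - \E L_n[f] \longrightarrow N(0, v_{\mathrm{SC}}^2[f]) $$
in distribution. Next I would show the tail convergence
$$ U_{n,k}[f] - \E U_{n,k}[f] \longrightarrow \sum_{i=1}^k \bigl( f(\psi_i) - \E f(\psi_i) \bigr) $$
in distribution. The centering is computed by exchangeability: $\E U_{n,k}[f] = (k/n)\,\E L_n[f]$, and bounded convergence combined with Theorem \ref{thm:mplaw} gives $(1/n)\E L_n[f] \to \int f\,dF_{\mathrm{MP}}$. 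For the fluctuation, the probability that two of the $k$ chosen indices coincide is $O(1/n)$, so for computing limits the without-replacement sampling may be replaced by i.i.d.\ sampling from the empirical measure $F^{A_n}$. Since $F^{A_n} \Rightarrow F_{\mathrm{MP}}$ almost surely by Theorem \ref{thm:mplaw} and $f$ is bounded continuous, these $k$ draws then converge jointly in distribution to $k$ i.i.d.\ Marchenko--Pastur samples.

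To combine the two ingredients into the claimed convolution, I would prove asymptotic independence of $L_n[f] - \E L_n[f]$ and $U_{n,k}[f] - \E U_{n,k}[f]$ via characteristic functions. For $t,s \in \R$,
$$ \E\bigl[ e^{it(L_n[f] - \E L_n[f]) + is(U_{n,k}[f] - \E U_{n,k}[f])} \bigr] = \E\bigl[ e^{it(L_n[f] - \E L_n[f])}\,\phi_n(s;A_n) \bigr], $$
where $\phi_n(s;A_n) := \E[ e^{is(U_{n,k}[f] - \E U_{n,k}[f])} \mid A_n ]$ is bounded by $1$ and depends on $A_n$ only through $F^{A_n}$. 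A direct computation gives
$$ \phi_n(s;A_n) = e^{-is\,\E U_{n,k}[f]}\Bigl( \int e^{isf(x)}\,dF^{A_n}(x)\Bigr)^k + O(1/n), $$
uniformly in $s$, which converges almost surely to the deterministic characteristic function $\phi(s)$ of $\sum_{i=1}^k ( f(\psi_i) - \E f(\psi_i) )$. Bounded convergence, combined with the first step, factors the joint characteristic function into the product $\phi_L(t)\phi(s)$ in the limit, yielding joint convergence with independent limits and thus the convolution form of the theorem's conclusion.

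The main obstacle is the asymptotic independence step, where care is needed to disentangle $L_n[f]$ (a function of $A_n$) and $U_{n,k}[f]$ (a function of both $A_n$ and $\pi$). Once one observes that conditioning on $A_n$ freezes $L_n[f]$ while the conditional distribution of $U_{n,k}[f]$ depends on $A_n$ only through $F^{A_n}$---whose fluctuations about $F_{\mathrm{MP}}$ are negligible for this purpose---the independence falls out. The remaining pieces, including the passage from individual to joint convergence and the recognition of the limit as a convolution, are direct adaptations of the argument for Theorem \ref{thm:wigner:finite} and require no new ideas.
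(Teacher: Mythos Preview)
Your argument is correct, but it takes a genuinely different route from the paper's. The paper proves Theorem~\ref{thm:sc:finite} by first establishing the rigidity estimate of Theorem~\ref{thm:sc:rigidity} and then repeating verbatim the proof of Theorem~\ref{thm:wigner:finite}: one replaces each sampled eigenvalue $\lambda_{\xi_j}(A_n)$ by its \emph{deterministic} classical location $\gamma_{\xi_j}$, after which $L_n[f]$ and $\sum_{j=1}^k f(\gamma_{\xi_j})$ are exactly independent (the first depends only on $A_n$, the second only on the sampling), and the two limits are computed separately.

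Your approach bypasses rigidity entirely. By conditioning on $A_n$ and observing that the conditional law of $U_{n,k}[f]$ depends on $A_n$ only through $F^{A_n}$, you obtain asymptotic independence from the almost-sure weak convergence $F^{A_n}\Rightarrow F_{\mathrm{MP}}$ (Theorem~\ref{thm:mplaw}) alone; the key point that $\phi_n(s;A_n)\to\phi(s)$ is deterministic in the limit, combined with $|\phi_n|\le 1$ and bounded convergence, cleanly factors the joint characteristic function. This is more elementary for fixed $k$: it uses only the law of large numbers for the spectrum rather than the much finer input of rigidity. The paper's route, by contrast, yields exact independence after the rigidity substitution and has the advantage of extending without change to Theorem~\ref{thm:sc:infty}, where $k\to\infty$ and one needs the replacement $\lambda_{\xi_j}\mapsto\gamma_{\xi_j}$ to hold with error $o(\alpha_{n,k}^{-1}/k)$ rather than merely $o(1)$---something the bare Marchenko--Pastur law cannot provide.
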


\begin{theorem} \label{thm:sc:infty}
For each $n \geq 1$, let $A_n = \frac{1}{n} X_n^\ast X_n$ be a real sample covariance matrix of size $n$ where $X_n = (x_{nij})_{1 \leq i,j \leq n}$.  
Assume the sequence $\{A_n\}_{n \geq 1}$ satisfies condition {\bf C1} and suppose $\E[x_{nij}^4] = m_4$ for all $1 \leq i,j \leq n$ and all $n \geq 1$.  
Let $f$ be a real-valued, Lipschitz function with $\|f\|_s < \infty$ for some $s > 3/2$.  Let $k = k(n)$ be a positive integer sequence such that 
$\min \{k,n-k \} \rightarrow \infty$ as $n \rightarrow \infty$.  Let $T_{n,k}[f]$ be given by \eqref{eq:def:tnk}.  Then 
$$ \alpha_{n,k} \left( T_{n,k}[f] - \E T_{n,k}[f] \right) \longrightarrow N(0,d_{\mathrm{SC}}^2[f]) $$
in distribution as $n \rightarrow \infty$, where $\alpha_{n,k}$ is defined in \eqref{def:alpha} and $d_{\mathrm{SC}}^2[f] := \var[f(\psi)]$ for a 
Marchenko-Pastur distributed random variable $\psi$.  
\end{theorem}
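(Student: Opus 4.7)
My plan is to mirror the strategy for Theorem \ref{thm:wigner:infty} by isolating the piece of the statistic that survives the $\alpha_{n,k}$ scaling. Writing $R_{n,k}[f] := \sum_{i=n-k+1}^{n} f(\mu_i(A_n))$ for the complementary tail statistic, we have the decomposition
\begin{equation*}
T_{n,k}[f] - \E T_{n,k}[f] = \left( \sum_{i=1}^n f(\lambda_i(A_n)) - \E \sum_{i=1}^n f(\lambda_i(A_n)) \right) - \bigl( R_{n,k}[f] - \E R_{n,k}[f] \bigr).
\end{equation*}
Condition \textbf{C1} is much stronger than the moment hypothesis of Theorem \ref{thm:sc:linear}, so the first parenthesis is $O_P(1)$. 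Since $\min\{k,n-k\} \to \infty$ forces $\alpha_{n,k} \to 0$, Slutsky's lemma reduces the problem to proving
\begin{equation*}
\alpha_{n,k} \bigl( R_{n,k}[f] - \E R_{n,k}[f] \bigr) \longrightarrow N(0, d_{\mathrm{SC}}^2[f]).
\end{equation*}

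The key observation is that, conditional on $A_n$, the unordered eigenvalues $(\mu_i(A_n))$ are a uniformly random permutation of $(\lambda_i(A_n))$, so $R_{n,k}[f]$ is the sum of a simple random sample of size $k$ drawn without replacement from the finite population $\{f(\lambda_i(A_n))\}_{i=1}^n$. The standard finite-population formulas yield
\begin{equation*}
\E[R_{n,k}[f] \mid A_n] = \frac{k}{n} \sum_{i=1}^n f(\lambda_i(A_n)), \qquad \var\bigl( R_{n,k}[f] \bigm| A_n \bigr) = \frac{k(n-k)}{n-1}\, \hat\sigma_n^2,
\end{equation*}
where $\hat\sigma_n^2 := \frac{1}{n} \sum_i \bigl(f(\lambda_i(A_n)) - \bar f_n\bigr)^2$ with $\bar f_n := \frac{1}{n}\sum_i f(\lambda_i(A_n))$. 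Multiplication by $\alpha_{n,k}^2$ converts the conditional variance into $(n/(n-1))\hat\sigma_n^2$, which converges almost surely to $d_{\mathrm{SC}}^2[f] = \var[f(\psi)]$ upon applying Theorem \ref{thm:mplaw} to the bounded continuous functions $f$ and $f^2$ (boundedness follows from the Lipschitz assumption together with $\|f\|_s < \infty$ for $s > 3/2$).

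To upgrade this variance convergence to a full CLT I would invoke H\'ajek's central limit theorem for simple random sampling without replacement, applied conditionally on $A_n$. Boundedness of $f$ makes the Lindeberg-type hypothesis trivial once $k(n-k)/n \to \infty$, so the conditional law of $\alpha_{n,k}\bigl(R_{n,k}[f] - (k/n) \sum_i f(\lambda_i(A_n))\bigr)$ converges weakly to $N(0, d_{\mathrm{SC}}^2[f])$ almost surely in $A_n$; a dominated convergence argument applied to conditional characteristic functions then removes the conditioning. The residual centering discrepancy $\alpha_{n,k}(k/n) \bigl(\sum_i f(\lambda_i(A_n)) - \E \sum_i f(\lambda_i(A_n))\bigr)$ has order $\sqrt{k/(n(n-k))} \cdot O_P(1) = o_P(1)$ since $n - k \to \infty$, and a final appeal to Slutsky's lemma completes the proof.

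I expect the main obstacle to be technical rather than conceptual: one must verify the almost sure Marchenko--Pastur convergence of both $(1/n)\sum_i f(\lambda_i(A_n))$ and $(1/n) \sum_i f(\lambda_i(A_n))^2$ under condition \textbf{C1} (both handled by Theorem \ref{thm:mplaw} since \textbf{C1} dominates its Lindeberg hypothesis), and one must cleanly execute the conditional-to-unconditional passage via the conditional characteristic function route sketched above. The conceptual heart — the identification of $R_{n,k}[f]$ given $A_n$ as a classical finite-population sampling sum whose asymptotic Gaussianity is controlled by the empirical variance of $f$ against the ESD, which in turn is pinned down by Theorem \ref{thm:mplaw} — is the sample covariance analogue of the argument behind Theorem \ref{thm:wigner:infty}.
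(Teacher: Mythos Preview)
Your argument is correct, and it takes a genuinely different route from the paper's.  The paper proves Theorem~\ref{thm:sc:infty} by transplanting the Wigner argument verbatim: it invokes the rigidity estimate (Theorem~\ref{thm:sc:rigidity}) to replace each sampled eigenvalue $\lambda_{\xi_j}(A_n)$ by the deterministic classical location $\gamma_{\xi_j}$, so that the tail sum becomes a simple random sample from the \emph{deterministic} population $\{f(\gamma_1),\ldots,f(\gamma_n)\}$, to which Lemma~\ref{lemma:depclt} applies directly.  You bypass rigidity entirely: by conditioning on $A_n$ you treat $R_{n,k}[f]$ as a sample from the \emph{random} population $\{f(\lambda_i(A_n))\}$, apply H\'ajek's finite-population CLT conditionally (the Lindeberg condition is trivial since $f$ is bounded and $\hat\sigma_n^2\to d_{\mathrm{SC}}^2[f]$ a.s.\ by Theorem~\ref{thm:mplaw}), and then pass to the unconditional limit by bounded convergence for characteristic functions.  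Your approach is more elementary in that it uses only the global Marchenko--Pastur law rather than the local rigidity bound, and in particular would go through under hypotheses much weaker than condition~{\bf C1}; the paper's approach has the virtue of reusing exactly the machinery already built for Theorem~\ref{thm:sc:finite}, where rigidity really is needed to pin down the joint law of finitely many sampled eigenvalues.
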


The last two theorems are valid under assumptions weaker than condition {\bf C1}, since one can derive the local Marchenko-Pastur law\footnote{The conclusion of the Marchenko-Pastur law (Theorem \ref{thm:mplaw}) can be equivalently stated as $$ \frac{\#\{1 \leq i \leq n : \lambda(A_n) \in I\}}{n} \longrightarrow \int_I \rho_{\mathrm{MP}} (x) dx $$ almost surely as $n \rightarrow \infty$, for any fixed interval $I$.  The local Marchenko-Pastur law refers to a similar conclusion holding when the interval $I$ is allowed to change with $n$.  Of particular interest is the case when the length of the interval decreases as $n$ tends to infinity; see for instance \cite{CMS} and references therein. } at the optimal scale under 
assumptions analogous to those in condition {\bf C0} (\cite{E}).

\subsection{Notation and Overview}
Asymptotic notations such as $O, o, \Omega$, and so forth, are used under the assumption that $n \rightarrow \infty$.  The notation $O_{C}(\cdot)$ 
emphasizes that the hidden constant depends on $C$.  

An event $E$, which depends on $n$, is said to hold with overwhelming probability if $\Prob(E) \geq 1 - O_C(n^{-C})$ for every constant $C > 0$.  
We let $E^C$ denote the complement of the event $E$.  

The paper is organized as follows.  In Section \ref{sec:wigner:proof}, we prove Theorems \ref{thm:wigner:finite} and \ref{thm:wigner:infty}.  
Section \ref{sec:sc:proof} is devoted to Theorems \ref{thm:sc:finite} and \ref{thm:sc:infty}.  

\subsection{Acknowledgements}
The authors would like to thank Persi Diaconis and Laszlo Erd\"os for useful comments.  We also thank the anonymous referee for many helpful comments, corrections, and references.  A.S. has been supported in part by the NSF grant DMS-1007558.  S.O. has been supported by grant AFOSAR-FA-9550-12-1-0083.  

\section{Proof of Theorems \ref{thm:wigner:finite} and \ref{thm:wigner:infty}} \label{sec:wigner:proof}

In order to study the limiting distribution of $S_{n,k}[f] - \E S_{n,k}[f]$, we let $(\xi_1, \ldots, \xi_k)$ be a random sample without replacement 
from $\{1,2,\ldots, n\}$ independent of $M_n$.  Then 
$$ S_{n,k}[f] \stackrel{\text{d}}{=} \sum_{i=1}^{n} f(\lambda_i(M_n)) - \sum_{j=1}^k f(\lambda_{\xi_j}(M_n)) = L_n[f] - \sum_{j=1}^k f(\lambda_{\xi_j}(M_n)), $$
where
$$ L_n[f] := \tr \left[ f(M_n) \right]. $$

We now take advantage of the following rigidity result based on \cite[Theorem 2.2]{EYY} and \cite[Theorem 3.6]{LY}.  Let $\eta_j = \eta^{(n)}_j$ be the classical location of the $j$th eigenvalue.  That is,
$$ \int_{-\infty}^{\eta_j}\rho(x) dx = \frac{j}{n} $$
where $\rho$ is the density of the semicircle distribution given in \eqref{eq:def:sc}.  

\begin{theorem}[Rigidity of eigenvalues] \label{thm:wigner:rigidity}
Let $M_n= \frac{1}{\sqrt{n}} W_n = \frac{1}{\sqrt{n}} (w_{ij})_{1 \leq i, j \leq n}$ be a real symmetric Wigner matrix of size $n$.  Assume there exists a constant $C_1$ such that
\begin{equation} \label{eq:4mom}
	\sup_{1 \leq i < j \leq n} \E[w_{ij}^4] \leq C_1. 
\end{equation}
Then for any $0 < \eps < 1/2$, there exists constants $C, c > 0$ and $n_0$ (depending only on $C_1, \eps$, and $\sigma$ from Definition \ref{def:wigner}) such that the event
\begin{equation} \label{eq:wigner:event}
	\left\{  \exists j : |\lambda_j(M_n) - \eta_j| \geq (\log n)^{c \log \log n} n^{-2/3} [\min\{j,n-j+1\}]^{-1/3} \right\}
\end{equation}
holds with probability at most
$$ \Prob(\Omega_n^C) + C\left( \frac{n^{4 \eps}}{n^2} \sum_{1 \leq i < j \leq n} \E\left[ w_{ij}^4 \indicator{|w_{ij}| > n^{1/2 - \eps}}\right] + \frac{n^{2 \eps}}{n} \sum_{i=1}^n \E \left[ w_{ii}^2 \indicator{|w_{ii}| > n^{1/2 - \eps}} \right] \right) $$
for any $n>n_0$, where the event $\Omega_n$ holds with overwhelming probability.  
\end{theorem}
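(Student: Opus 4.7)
The plan is to reduce Theorem~\ref{thm:wigner:rigidity} to the rigidity theorem of Erd\H{o}s--Yau--Yin \cite[Theorem~2.2]{EYY} (strengthened by Lee--Yin \cite[Theorem~3.6]{LY}) via a standard truncation. Those black-box results deliver the rigidity bound under strong moment control on the entries, which is not assumed here; the explicit error term in the present statement is exactly what one pays to arrange that strong moment control by truncation.

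First I would set up the truncated, recentered entries
\[
	\tilde{w}_{ij} := w_{ij}\indicator{|w_{ij}|\leq n^{1/2-\eps}} - \E\bigl[w_{ij}\indicator{|w_{ij}|\leq n^{1/2-\eps}}\bigr],
\]
assemble them into a symmetric matrix $\tilde{W}_n$, and define $\tilde{M}_n := n^{-1/2}\tilde{W}_n$. A union bound combined with fourth-moment (resp.\ second-moment) Markov inequalities on the off-diagonal (resp.\ diagonal) entries yields
\[
	\Prob\bigl(\exists\, i,j : |w_{ij}| > n^{1/2-\eps}\bigr) \leq \frac{n^{4\eps}}{n^{2}}\sum_{i<j}\E\bigl[w_{ij}^{4}\indicator{|w_{ij}|>n^{1/2-\eps}}\bigr] + \frac{n^{2\eps}}{n}\sum_{i}\E\bigl[w_{ii}^{2}\indicator{|w_{ii}|>n^{1/2-\eps}}\bigr],
\]
which is precisely the error appearing in the theorem. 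On the complement of this bad event, $W_n$ agrees with $\tilde{W}_n + D_n$, where $D_n$ is the deterministic mean-shift matrix. Using $\E[w_{ij}]=0$, a fourth-moment estimate gives $|(D_n)_{ij}| = O(C_1\, n^{-3/2+3\eps})$, so $\|n^{-1/2}D_n\|_{\mathrm{op}} = O(n^{-1+3\eps})$ --- far below the target rigidity scale and absorbed by Weyl's inequality at the end.

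Next, the truncated entries $\tilde{w}_{ij}$ are centered, uniformly bounded by $2n^{1/2-\eps}$, and have variances equal to $1$ (resp.\ $\sigma^{2}$) up to $o(1)$ corrections; the latter are harmlessly absorbed into an overall rescaling, or alternatively handled by an additional $O(n^{-1})$ perturbation of the matrix. In this form $\tilde{M}_n$ satisfies the hypotheses of \cite[Theorem~2.2]{EYY}/\cite[Theorem~3.6]{LY}, which produces an overwhelming-probability event $\Omega_n$ on which
\[
	|\lambda_j(\tilde{M}_n) - \eta_j| \leq (\log n)^{c\log\log n}\,n^{-2/3}[\min\{j,n-j+1\}]^{-1/3}
\]
uniformly in $j$, with constants depending only on $C_1$, $\eps$, and $\sigma$. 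Intersecting $\Omega_n$ with the good-truncation event and invoking Weyl's inequality transports the rigidity bound from $\tilde{M}_n$ to $M_n$ up to an additive error of $O(n^{-1+3\eps})$, which is dominated by the stated bound as soon as $\eps < 1/10$ (for example).

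The main obstacle is verificational rather than conceptual: one must confirm that the constants $C$, $c$, and the threshold $n_0$ produced by the Erd\H{o}s--Yau--Yin/Lee--Yin machinery depend on the truncated distribution only through quantities that can themselves be bounded in terms of $C_1$, $\sigma$, and $\eps$. Since truncation forces all moments of $\tilde{w}_{ij}$ to be bounded polynomially in the truncation scale $n^{1/2-\eps}$, this is built into those arguments provided one tracks the dependence of the constants on the moment bounds; this bookkeeping is the technically delicate part of the reduction, but it does not require any new probabilistic input beyond what is already assumed.
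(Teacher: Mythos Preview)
Your overall strategy --- truncate and then invoke the EYY/LY rigidity theorem as a black box --- is exactly the paper's. The substantive difference is in how the first two moments are restored after truncation, and here your sketch is too casual in two places.

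First, the variance defect. After truncation and recentering, $\var(\tilde w_{ij}) = 1 - \tau_{ij}^2$ with $\tau_{ij}^2 := \E[w_{ij}^2\indicator{|w_{ij}|>n^{1/2-\eps}}]$, and these defects differ entry to entry, so an ``overall rescaling'' cannot restore unit variance. Your alternative, an ``additional $O(n^{-1})$ perturbation of the matrix,'' is not justified either: rescaling entry by entry produces a random perturbation whose operator norm is not obviously $O(n^{-1})$ (the crude Frobenius bound gives only $O(n^{-1/2+2\eps})$, far too large). Since the version of the LY theorem quoted in the paper (Theorem~\ref{thm:LY:rigidity}) requires an honest Wigner matrix with unit off-diagonal variance, this is a genuine gap. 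Second, the mean shift. Your bound $\|n^{-1/2}D_n\|_{\mathrm{op}} = O(n^{-1+3\eps})$, obtained from $\|D_n\|\le n\max_{i\neq j}|(D_n)_{ij}|$, only beats the edge rigidity scale $n^{-2/3}$ when $\eps < 1/9$; the theorem, however, is stated for all $\eps\in(0,1/2)$.

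The paper avoids both problems at once by a different coupling. Rather than recentering, it sets $\hat w_{ij}:=w_{ij}\indicator{|w_{ij}|\le\eps_n}$ with $\eps_n:=n^{1/2-\eps}$, writes $\mu_{ij}:=\E\hat w_{ij}$ and $\tau_{ij}^2:=\E[w_{ij}^2]-\E[\hat w_{ij}^2]$, and defines $\tilde w_{ij}$ as a \emph{mixture}: equal to $\hat w_{ij}$ with probability $1 - |\mu_{ij}|/\eps_n - \tau_{ij}^2/\eps_n^2$, and otherwise equal to an independent two-point random variable $z_{ij}$ whose mean and second moment are chosen to make $\E[\tilde w_{ij}]=0$ and $\E[\tilde w_{ij}^2]=\E[w_{ij}^2]$ \emph{exactly}. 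One checks $|z_{ij}|\le 4\eps_n$ and $\E[\tilde w_{ij}^4]\le 513\,C_1$, so $\tilde M_n$ is a genuine Wigner matrix to which Theorem~\ref{thm:LY:rigidity} applies directly. Crucially, on the event $\{M_n=\tilde M_n\}$ the two matrices coincide and no Weyl perturbation is needed; the complementary probability $\Prob(M_n\neq\tilde M_n)$ is bounded by the displayed error term for every $\eps\in(0,1/2)$. This mixture trick is the one idea you are missing.
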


The proof of Theorem \ref{thm:wigner:rigidity} is based on the machinery developed in \cite{EYY,LY}; we present the proof in Appendix \ref{sec:wigner:rigidity:proof}.  For the moment, we assume Theorem \ref{thm:wigner:rigidity} and complete the proof of Theorems \ref{thm:wigner:finite} and \ref{thm:wigner:infty}.

It follows from Theorem \ref{thm:wigner:rigidity} that
\begin{equation} \label{eq:fxi}
	\sum_{j=1}^k f(\lambda_{\xi_j}(M_n)) = \sum_{j=1}^k f(\eta_{\xi_j}) + O_f \left(\frac{k (\log n)^{c \log \log n}}{n^{2/3}} \right) 
\end{equation}
with probability $1-o(1)$.  Here we have used the fact that $f$ is Lipschitz.  

\begin{proof}[Proof of Theorem \ref{thm:wigner:finite}]
From \eqref{eq:fxi}, we have that 
$$ L_n[f] -  \sum_{j=1}^k f(\lambda_{\xi_j}(M_n)) = L_n[f] - \sum_{j=1}^k f(\eta_{\xi_j}) + o_f(1) $$
with probability $1-o(1)$.  We note that $L_n[f]$ and $\sum_{j=1}^k f(\eta_{\xi_j})$ are independent.  

It also follows from Theorem \ref{thm:wigner:rigidity} that
$$ \E S_{n,k}[f] = \E L_n[f] - \E \sum_{j=1}^k f(\eta_{\xi_j}) + o_f(1)$$
since $f$ is bounded.

By Theorem \ref{thm:wigner:linear}, it follows that $L_n[f] - \E L_n[f]$ converges to a normal distribution with mean zero and variance $v^2[f]$.  It remains to compute the limiting distribution of 
$$ \sum_{j=1}^k f(\eta_{\xi_j}) - \E \sum_{j=1}^k f(\eta_{\xi_j}). $$

Let $\tilde{\xi}_1, \ldots, \tilde{\xi}_k$ be i.i.d. uniform random variables on $\{1,2,\ldots, n\}$ independent of $M_n$.  We begin by noting that
$$ \E \sum_{j=1}^k f(\eta_{\xi_j}) = \E \sum_{j=1}^k f(\eta_{\tilde{\xi_j}}). $$

Let $g$ be an arbitrary bounded, continuous function.  Then
\begin{align*}
	\E g \left( \sum_{j=1}^k f(\eta_{\xi_j}) \right) &= \sum_{i_1, \ldots i_k \text{ distinct}} g \left( \sum_{j=1}^k f(\eta_{i_j}) \right) \frac{1}{n(n-1) \cdots (n-k+1)} \\
	&= \sum_{i_1, \ldots i_k \text{ distinct}} g \left( \sum_{j=1}^k f(\eta_{i_j}) \right) \frac{1}{n^k} + O_g \left( \frac{k^2}{n} \right) \\
	&= \sum_{i_1, \ldots, i_k =1}^n g \left( \sum_{j=1}^k f(\eta_{i_j}) \right) \frac{1}{n^k} + O_g \left( \frac{k^2}{n} \right) \\
	&= \E g \left( \sum_{j=1}^k f(\eta_{\tilde{\xi}_j}) \right) + O_g \left( \frac{k^2}{n} \right).
\end{align*}

Therefore the limiting distribution of $\sum_{j=1}^k f(\eta_{\xi_j})$ is the same as the limiting distribution of $\sum_{j=1}^k f(\eta_{\tilde{\xi}_j})$.  A simple computation reveals that $\eta_{\tilde{\xi}_j}$ converges to the semicircle distribution as $n \rightarrow \infty$.  Since $f$ is continuous and bounded on $[-2,2]$, and $k$ is fixed, the proof of Theorem \ref{thm:wigner:finite} is complete.  
\end{proof}  

\begin{proof}[Proof of Theorem \ref{thm:wigner:infty}]
Suppose $\min\{k,n-k\} \rightarrow \infty$ as $n \rightarrow \infty$.  We begin by noting that
$$ S_{n,k}[f] \stackrel{\text{d}}{=} \sum_{i=1}^{n-k} f(\lambda_{\xi_i}(M_n)) \stackrel{\text{d}}{=} L_n[f] - \sum_{i=1}^k f(\lambda_{\xi_i}(M_n)), $$
where $(\xi_1,\xi_2,\ldots,\xi_n)$ is a random sample without replacement from $\{1,2,\ldots,n\}$ independent of $M_n$.  

Since $\alpha_{n,k} = o(1)$, it follows from Theorem \ref{thm:sc:linear} that 
$$ \alpha_{n,k} \left( L_n[f] - \E L_n[f] \right) \longrightarrow 0 $$
in probability as $n \rightarrow \infty$.  Therefore, it suffices to show that
\begin{equation} \label{eq:aflshow}
	\alpha_{n,k} \sum_{i=1}^k [f(\lambda_{\xi_i}(M_n)) - \E f(\lambda_{\xi_i}(M_n))] \longrightarrow N(0,d^2[f]) 
\end{equation}
in distribution as $n \rightarrow \infty$ or
\begin{equation} \label{eq:aflshow2}
	\alpha_{n,k} \sum_{i=1}^{n-k} [f(\lambda_{\xi_i}(M_n)) - \E f(\lambda_{\xi_i}(M_n))] \longrightarrow N(0,d^2[f])
\end{equation}
in distribution as $n \rightarrow \infty$.  

We will verify \eqref{eq:aflshow} when $k \leq n-k$ and verify \eqref{eq:aflshow2} in the case when $k > n-k$.  In the setting where the sequence $\{k(n)\}_{n \geq 1}$ alternates between the two cases, we use a sub-sequence argument since the limit in each case will be the same.  

Since the argument is the same in each case, we assume $k \leq n-k$ and verify \eqref{eq:aflshow}.  In this case, $\alpha_{n,k} = O(k^{-1/2})$.  From condition {\bf C0}, we find that the event \eqref{eq:wigner:event} from Theorem \ref{thm:wigner:rigidity} holds with probability $o(n^{-1/2})$.  Since $f$ is bounded, it follows that
\begin{equation} \label{eq:expank}
	\alpha_{n,k} \sum_{i=1}^k  \E f(\lambda_{\xi_i}(M_n)) = \alpha_{n,k} \sum_{i=1}^k  \E f(\eta_{\xi_i}) + o_f(1) 
\end{equation}

Therefore, by \eqref{eq:fxi} and \eqref{eq:expank}, it suffices to show that
\begin{equation} \label{eq:afxishow}
	\alpha_{n,k} \sum_{i=1}^k [f(\eta_{\xi_i}) - \E f(\eta_{\xi_i})] \longrightarrow N(0,d^2[f]) 
\end{equation}
in distribution as $n \rightarrow \infty$.  \eqref{eq:afxishow} will follow from Lemma \ref{lemma:depclt} below.  Indeed, since $f$ is bounded, a simple computation reveals that
$$ \var[f(\eta_{\xi_1})] \longrightarrow \var[f(\psi)] $$
as $n \rightarrow \infty$, where $\psi$ is a semicircle-distributed random variable.  
\end{proof}

\begin{lemma} \label{lemma:depclt}
For each $n \geq 1$, let $(\xi^{(n)}_1, \ldots, \xi^{(n)}_k)$ be a discrete random sample on $[n]:=\{1,2,\ldots, n\}$, where $k = k(n)$ is a positive integer sequence such that $\min\{k,n-k\} \rightarrow \infty$ as $n \rightarrow \infty$. Define $\zeta_i^{(n)} := \frac{\xi_i^{(n)}}{n}$ for each $n \geq 1$ and $i=1,2,\ldots,k$.  Let $f: [0,1] \rightarrow \R$ be a bounded function.  Then 
$$ \alpha_{n,k} \sum_{i=1}^k [f({\zeta^{(n)}_i}) - \E f({\zeta^{(n)}_i})] \longrightarrow N(0,\beta^2) $$
in distribution as $n \rightarrow \infty$, where 
$$ \beta^2 := \lim_{n \rightarrow \infty} \var[f(\zeta^{(n)}_1)] $$
and $\alpha_{n,k}$ is defined in \eqref{def:alpha}.  
\end{lemma}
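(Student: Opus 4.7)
The plan is to realize the sum as a linear statistic of a uniform random permutation and invoke Hoeffding's combinatorial central limit theorem (equivalently, H\'ajek's CLT for simple random sampling from a finite population). Write $Y_j^{(n)} := f(j/n)$; then $\bar Y_n := \frac{1}{n}\sum_{j=1}^n Y_j^{(n)} = \E f(\zeta_1^{(n)})$ and $\sigma_n^2 := \frac{1}{n}\sum_{j=1}^n (Y_j^{(n)} - \bar Y_n)^2 = \var f(\zeta_1^{(n)}) \to \beta^2$ by hypothesis. The case $\beta^2 = 0$ is degenerate and follows immediately from the variance calculation below, so assume $\beta^2 > 0$.

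First, I would realize the sum through a uniform random permutation $\pi$ of $\{1,\ldots,n\}$:
$$ \sum_{i=1}^k f(\zeta_i^{(n)}) \stackrel{\text{d}}{=} \sum_{i=1}^k Y_{\pi(i)}^{(n)} = \sum_{i=1}^n a_{i,\pi(i)}^{(n)}, \qquad a_{ij}^{(n)} := Y_j^{(n)} \indicator{i \leq k}. $$
The row, column, and grand means of the array are $a_{i\cdot}^{(n)} = \bar Y_n \indicator{i \leq k}$, $a_{\cdot j}^{(n)} = (k/n)\,Y_j^{(n)}$, and $a_{\cdot\cdot}^{(n)} = (k/n)\,\bar Y_n$, and the doubly centered entries work out to
$$ \tilde a_{ij}^{(n)} := a_{ij}^{(n)} - a_{i\cdot}^{(n)} - a_{\cdot j}^{(n)} + a_{\cdot\cdot}^{(n)} = \bigl((1-k/n)\indicator{i \leq k} - (k/n)\indicator{i > k}\bigr)\bigl(Y_j^{(n)} - \bar Y_n\bigr). $$
A short computation gives $\sum_{i,j}(\tilde a_{ij}^{(n)})^2 = k(n-k)\,\sigma_n^2$, hence
$$ \var\Bigl(\sum_{i=1}^n a_{i,\pi(i)}^{(n)}\Bigr) = \frac{1}{n-1}\sum_{i,j}\bigl(\tilde a_{ij}^{(n)}\bigr)^2 = \frac{k(n-k)}{n-1}\,\sigma_n^2, $$
so $\alpha_{n,k}^2 \var\bigl(\sum_i a_{i,\pi(i)}^{(n)}\bigr) \to \beta^2$, which matches the variance in the claim.

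Second, I would verify the Lindeberg-type sufficient condition for Hoeffding's combinatorial CLT, namely $\max_{i,j}(\tilde a_{ij}^{(n)})^2 \big/ \var\bigl(\sum_i a_{i,\pi(i)}^{(n)}\bigr) \to 0$. The boundedness of $f$ gives $|\tilde a_{ij}^{(n)}| \leq 2\|f\|_\infty$, and since $\min\{k,n-k\}\to\infty$ forces $k(n-k)/n \to \infty$ while $\sigma_n^2 \to \beta^2 > 0$, the ratio is $O\bigl(n/(k(n-k))\bigr) = o(1)$. Hoeffding's theorem then produces asymptotic normality of the standardized permutation statistic, and the variance scaling above converts this into the desired $N(0,\beta^2)$ limit.

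The main obstacle is essentially bookkeeping: one must identify the correct finite-population correction implicit in the scaling $\alpha_{n,k}$ and rewrite the statistic in the Hoeffding framework. Once inside that framework, verification of the Lindeberg condition is trivial because $f$ is bounded and $k(n-k)/n\to\infty$. An alternative route via Stein's method on the exchangeable pair obtained by swapping two positions in $\pi$, or a direct moment or characteristic-function computation exploiting exchangeability, would work equally well but seems to offer no advantage.
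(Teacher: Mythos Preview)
Your proposal is correct and the computations check out; this is exactly the route the paper itself flags just before the statement, remarking that the lemma is ``a direct consequence of \cite[Theorem 1]{ER} (see also \cite{H} and \cite[Section 3]{Hbook}).''  The Hoeffding combinatorial CLT you invoke is equivalent to the Erd\H{o}s--R\'enyi/H\'ajek finite-population sampling CLT, and your verification of the max-over-variance condition is clean.

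However, the proof the paper actually \emph{writes out} in Appendix~\ref{sec:depclt} takes a different route: it expresses the centered sum as a martingale difference sequence $Z_{n,j}$ with respect to the filtration $\mathcal{F}_{n,j}=\sigma(\xi_1^{(n)},\ldots,\xi_j^{(n)})$, reduces $Z_{n,j}$ to the explicit form $\alpha_{n,k}\frac{n-k}{n-j}\bigl[f(\zeta_j^{(n)})-\E_{j-1}f(\zeta_j^{(n)})\bigr]$, and then applies Billingsley's martingale CLT (Theorem~\ref{thm:clt}).  The Lindeberg condition is again trivial from boundedness, but the conditional-variance convergence~\eqref{clt_variance} requires the moment estimates of Lemma~\ref{lemma:comp}, which are proved by direct combinatorial expansion.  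Your approach buys brevity and avoids Lemma~\ref{lemma:comp} entirely by outsourcing those computations to the Hoeffding/H\'ajek black box; the paper's approach is longer but self-contained in that it only appeals to the general martingale CLT rather than a specialized combinatorial result.
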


Lemma \ref{lemma:depclt} is a direct consequence of \cite[Theorem 1]{ER} (see also \cite{H} and \cite[Section 3]{Hbook}).  For completeness we give a proof of Lemma \ref{lemma:depclt} in Appendix \ref{sec:depclt}.

\section{Proof of Theorems \ref{thm:sc:finite} and \ref{thm:sc:infty}} \label{sec:sc:proof}

In order to prove Theorems \ref{thm:sc:finite} and \ref{thm:sc:infty} we require a rigidity estimate for the eigenvalues of sample covariance random matrices.  Theorem \ref{thm:sc:rigidity} below provides such an estimate and is similar to Theorem \ref{thm:wigner:rigidity}.  

Let $\gamma_j = \gamma^{(n)}_j$ be the classical location of the $j$th eigenvalue.  That is,
$$ \int_{0}^{\gamma_j}\rho_{\mathrm{MP}}(x) dx = \frac{j}{n} $$
where $\rho_{\mathrm{MP}}$ is the density of the Marchenko-Pastur law given in \eqref{eq:def:mp}.  

\begin{theorem} \label{thm:sc:rigidity}
Let $A_n = \frac{1}{n} X_n^\ast X_n$ be a real (complex) sample covariance matrix of size $n$ where $X_n = (x_{ij})_{1 \leq i,j \leq n}$.  Assume there exists a constant $C_1$ such that
\begin{equation} \label{eq:subgauss}
	\sup_{1 \leq i,j \leq n} \E|x_{ij}|^p \leq (C_1 \sqrt{p})^p \quad \text{for all} \quad p \geq 1
\end{equation}
and suppose $x_{ij}, 1 \leq i,j \leq n$ have symmetric distribution.  Then there exists constants $C,c,c_0,c_1>0$ (depending only on $C_1$) such that
$$ \Prob \left( \exists j : |\lambda_j(A_n) - \gamma_j| \geq C(\log n)^{c \log \log n} n^{-2/3} \right)  \leq C \exp \left(- c_o (\log n)^{c_1 \log \log n} \right) $$
for $n$ sufficiently large.
\end{theorem}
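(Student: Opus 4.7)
The plan is to adapt the argument used for Theorem \ref{thm:wigner:rigidity}, following the Erd\H{o}s--Yau--Yin and Lee--Yin machinery in the sample covariance setting. The proof proceeds in two stages: first establish a local Marchenko--Pastur law at the optimal scale $\eta \gtrsim n^{-1+\eps}$, and then deduce the claimed rigidity estimate by a standard inversion argument.

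For the local law, define the Stieltjes transform $m_n(z) := \frac{1}{n}\tr(A_n - z)^{-1}$ and let $m_{\mathrm{MP}}(z)$ denote the Stieltjes transform of $\rho_{\mathrm{MP}}$, which satisfies the self-consistent equation $1 + z m + z m^2 = 0$. For each diagonal Green-function entry $G_{kk}(z) = [(A_n - z)^{-1}]_{kk}$, the Schur complement formula expresses $G_{kk}$ (up to lower order corrections) in the form $-1/(z(1 + m_n(z)) + \varepsilon_k(z))$, where $\varepsilon_k$ is a quadratic form in the rows/columns of $X_n$ centered at its mean. The subgaussian hypothesis \eqref{eq:subgauss} allows one to invoke a Hanson--Wright-type concentration inequality (or the subgaussian large-deviation estimate of \cite{EYY}) to bound $|\varepsilon_k(z)|$ at scale $(\log n)^{c \log \log n}/\sqrt{n\eta}$, with the required subexponential tail probability $C \exp(-c_0 (\log n)^{c_1 \log \log n})$. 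Averaging in $k$ and exploiting the stability of the MP self-consistent equation away from the spectral edges then yields $|m_n(z) - m_{\mathrm{MP}}(z)| = O((\log n)^{c \log \log n}/(n\eta))$ uniformly for $z$ in a polynomial net. The symmetric distribution hypothesis simplifies this step by annihilating the odd-moment contributions to the Schur expansion.

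To convert the local law into eigenvalue rigidity, I would follow the standard integration argument of \cite{EYY}: for each real $E$ in a suitable net, integrate $\Im[m_n(\cdot + i\eta) - m_{\mathrm{MP}}(\cdot + i\eta)]$ against a smooth truncation to control $|F^{A_n}(E) - F_{\mathrm{MP}}(E)|$ at scale $(\log n)^{c \log \log n}/n$; inverting the classical location map on $[0,4]$ then gives $|\lambda_j(A_n) - \gamma_j| \leq C (\log n)^{c \log \log n} n^{-2/3}$ uniformly in $j$. A union bound over the polynomially-sized net of spectral parameters preserves the subexponential deviation estimate and yields the stated probability bound.

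The main obstacle is the hard edge at $0$, where $\rho_{\mathrm{MP}}$ has a square-root singularity. The stability of the MP self-consistent equation degrades near this edge, and Schur-type concentration bounds require extra care because the diagonal Green function entries can be large there. This is addressed by exploiting the explicit algebraic form of $m_{\mathrm{MP}}$ near $0$, restricting the net of spectral parameters appropriately, and accepting only the uniform $n^{-2/3}$ bound in the conclusion rather than the sharper $j$-dependent estimate available in the Wigner case. Once the local law is established at the optimal scale, the rigidity estimate follows by the same bootstrap argument as in the appendix proof of Theorem \ref{thm:wigner:rigidity}.
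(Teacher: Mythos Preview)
Your approach is genuinely different from the paper's, and while it could in principle be made to work, it is far more labor-intensive and mischaracterizes where the real difficulty lies.

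The paper does \emph{not} redevelop a local Marchenko--Pastur law. Instead it quotes an existing rigidity estimate of Bourgade--Yau--Yin \cite[Lemma~5.1]{BYY} (stated here as Lemma~\ref{lemma:byy:rigidity}) as a black box, which already controls $|\lambda_j(A_n)-\gamma_j|$ for all indices $\varphi_n^c < j < n-\varphi_n^c$. The proof then only has to treat the two edge regions by hand. Near the hard edge $1\le j\le \varphi_n^c$ (and more generally $j\le \eps n$), the paper exploits the elementary bound $\gamma_j\le \pi^2 j^2/(2n^2)$, which forces both $\lambda_j$ and $\gamma_j$ to be so small that their difference is $O(\varphi_n^{2c}/n^2)\ll n^{-2/3}$: the hard edge is in fact the \emph{easy} case for the uniform $n^{-2/3}$ bound, not the main obstacle as you suggest. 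The genuine work is at the soft edge $n-\varphi_n^c\le j\le n$, where the paper invokes Soshnikov's moment estimate \cite[Lemma~3]{SOuniv} for $\E[\tr(A_n)^{\lfloor n^{2/3}\rfloor}]$ together with Markov's inequality to bound $\lambda_n(A_n)$ from above; this is precisely where the symmetric-distribution hypothesis enters, not in any Schur-complement expansion.

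Your closing remark that ``the rigidity estimate follows by the same bootstrap argument as in the appendix proof of Theorem~\ref{thm:wigner:rigidity}'' also misreads that appendix: the proof there is a truncation-and-coupling argument reducing to the bounded-entry case of \cite{LY}, not a local-law bootstrap. So while your Stieltjes-transform route is the standard way to establish rigidity from scratch, here it duplicates results already available in \cite{BYY}, misplaces the role of the symmetry assumption, and leaves the actually delicate step (a sharp upper tail for $\lambda_n$) unaddressed.
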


With this rigidity estimate in hand, the proof of Theorems \ref{thm:sc:finite} and \ref{thm:sc:infty} is nearly identical to the proof of Theorems \ref{thm:wigner:finite} and \ref{thm:wigner:infty}; we leave the details to the reader.  It remains to prove Theorem \ref{thm:sc:rigidity}.  

We will need the following version of \cite[Lemma 5.1]{BYY}.  It should be noted that \cite[Lemma 5.1]{BYY} is much more general than the version stated here.  For convenience, we define
$$ \varphi_n := (\log n)^{\log \log n}. $$

\begin{lemma}[\cite{BYY}] \label{lemma:byy:rigidity}
Let $A_n = \frac{1}{n} X_n^\ast X_n$ be a real (complex) sample covariance matrix of size $n$ where $X_n = (x_{ij})_{1 \leq i,j \leq n}$.  Assume there exists a constant $C_1$ such that \eqref{eq:subgauss} holds.  Then there exists constants $C,c,c_0>0$ (depending only on $C_1$) such that for any $\varphi_n^c < j < n - \varphi_n^c$, 
$$ \Prob \left( \gamma_{j - \varphi_n^c} \leq \lambda_j(A_n) \leq \gamma_{j + \varphi_n^c} \right) \geq 1 - \exp(- c_0 \varphi_n) $$
and
$$ \Prob \left( \frac{|\lambda_j(A_n) - \gamma_j|}{\gamma_j} > \frac{ C \varphi_n^c}{j \left( 1 - \frac{j}{n} \right)^{1/3}} \right) \leq \exp( -c_0 \varphi_n ). $$
\end{lemma}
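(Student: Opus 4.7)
The plan is to derive both estimates from a local Marchenko-Pastur law at the optimal scale, via the Erdős-Schlein-Yau template as implemented for sample covariance matrices in \cite{BYY}. The argument proceeds in three stages.

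\textbf{Stage 1 (local MP law).} Let $G(z) = (A_n - zI)^{-1}$ and $m_n(z) = \frac{1}{n}\tr G(z)$. The core technical task is to show that, with probability at least $1 - \exp(-c_0 \varphi_n)$, $|m_n(z) - m_{MP}(z)| \leq \varphi_n^C/(n\eta)$ uniformly for $z = E + i\eta$ with $E$ in (a neighborhood of) $[0,4]$ and $\eta \geq \varphi_n^C/n$. The Schur complement formula expresses $1/G_{ii}$ in terms of a minor resolvent $G^{(i)}$ and a quadratic form $\mathbf{x}_i^* R \mathbf{x}_i$ in a column of $X_n$. Under the subgaussian hypothesis \eqref{eq:subgauss}, Markov's inequality applied at moment order $p \sim \varphi_n$ yields concentration of this quadratic form around $\frac{1}{n}\tr R$ with sub-exponential probability $1 - \exp(-c_0 \varphi_n)$. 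A stability analysis of the self-consistent equation for $m_{MP}(z)$, combined with a multi-scale bootstrap that starts at $\eta = O(1)$ and descends to $\eta \sim \varphi_n^C/n$, then produces the claimed bound.

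\textbf{Stage 2 (counting function).} A Helffer-Sjöstrand representation of $\mathbf{1}_{[a,b]}$ converts the imaginary-part bound from Stage 1 into control on the empirical counting function $\mathcal{N}(I) := \#\{j : \lambda_j(A_n) \in I\}$: for any bounded interval $I$, $|\mathcal{N}(I) - n\int_I \rho_{MP}(x)\,dx| \leq \varphi_n^C$ with probability at least $1 - \exp(-c_0 \varphi_n)$.

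\textbf{Stage 3 (pointwise rigidity).} The first assertion follows by contradiction: if $\lambda_j \notin [\gamma_{j-\varphi_n^c}, \gamma_{j+\varphi_n^c}]$, then one of $\mathcal{N}([\gamma_{j-\varphi_n^c}, \gamma_j])$ or $\mathcal{N}([\gamma_j, \gamma_{j+\varphi_n^c}])$ must deviate from its MP prediction by more than $\varphi_n^c$, violating Stage 2 once $c$ is chosen larger than the $C$ therein. The second, quantitative assertion is obtained by inverting the map $j \mapsto \gamma_j$ using the asymptotic behavior of $\rho_{MP}$. At the hard edge $x \to 0^+$, $\rho_{MP}(x) \sim (2\pi\sqrt{x})^{-1}$ gives $\gamma_j \sim (j/n)^2$, so an $O(\varphi_n^c)$ perturbation of the index yields $|\lambda_j - \gamma_j| \lesssim \gamma_j \varphi_n^c/j$. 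At the soft edge $x \to 4^-$, $\rho_{MP}(x) \sim \sqrt{4-x}/(2\pi)$ gives $4 - \gamma_j \sim ((n-j)/n)^{2/3}$, producing the $(1 - j/n)^{-1/3}$ weighting. Both regimes match in the bulk and combine into the stated bound.

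The main obstacle is Stage 1: pushing the local law down to the optimal scale $\eta \sim \varphi_n^C/n$ with the required sub-exponential tail. The stability operator of the MP self-consistent equation degenerates near the spectrum, and the bootstrap descent must track this degeneration carefully through intermediate scales. The square case $c = 1$ considered here has the additional feature that $\rho_{MP}$ is singular at $x = 0$, which requires separate handling and is precisely what produces the sharp $1/j$ factor in the relative-error estimate; the soft edge at $x = 4$, by contrast, reproduces the familiar Tracy-Widom scaling $n^{-2/3}$.
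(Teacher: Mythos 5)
You should first note how the paper itself treats this statement: it is not proved there at all, but imported verbatim as a special case of \cite[Lemma 5.1]{BYY}, whose proof (a local Marchenko--Pastur law at the optimal scale, turned into rigidity) occupies a substantial part of that reference. Your three-stage outline is exactly the route taken in that literature, so the strategy is the right one; the issue is that, as a proof, essentially all of the content is concentrated in your Stage 1, and Stage 1 is only described, not established. The uniform bound $|m_n(z)-m_{\mathrm{MP}}(z)|\leq \varphi_n^{C}/(n\eta)$ down to $\eta\sim\varphi_n^{C}/n$, with error probability $\exp(-c_0\varphi_n)$ rather than merely $n^{-C}$, and valid up to the hard edge where $\rho_{\mathrm{MP}}$ is singular, is precisely the theorem being cited; invoking ``a multi-scale bootstrap'' and ``a stability analysis of the self-consistent equation'' names the machinery (large-deviation bounds for quadratic forms at moment order $p\sim\varphi_n$, fluctuation averaging, the degenerating stability factor near the edges) without carrying any of it out. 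So the proposal is a correct roadmap to the cited result, not an independent proof of the lemma; for the purposes of this paper the honest move is the one the authors make, namely to quote \cite[Lemma 5.1]{BYY}.

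Two smaller points in Stages 2--3 deserve correction even at the level of a sketch. In the contradiction argument, knowing $\lambda_j>\gamma_{j+\varphi_n^c}$ does not by itself force a deviation of $\mathcal{N}([\gamma_j,\gamma_{j+\varphi_n^c}])$; the right intervals are those anchored at the hard edge, e.g. $\mathcal{N}([0,\gamma_{j+\varphi_n^c}])\leq j-1$ versus the prediction $j+\varphi_n^c$, which is fine since Stage 2 covers all bounded intervals but does require the counting estimate to hold on intervals containing the singular point $0$ --- again the hard-edge local law you deferred. Also, $\rho_{\mathrm{MP}}(x)\sim 1/(\pi\sqrt{x})$ as $x\to 0^{+}$, not $1/(2\pi\sqrt{x})$; this is harmless for the final bound, and your inversion of $j\mapsto\gamma_j$ (giving $\gamma_{j\pm\varphi_n^c}-\gamma_j=O(\gamma_j\varphi_n^c/j)$ near $0$ and $O(\varphi_n^c\,n^{-2/3}(1-j/n)^{-1/3})$ near $4$) does reproduce the stated relative-error estimate.
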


\begin{proof}[Proof of Theorem \ref{thm:sc:rigidity}]
By the union bound, it suffices to show that 
$$ \Prob \left( | \lambda_j(A_n) - \gamma_j| \geq C \varphi_n^c n^{-2/3} \right) \leq C \exp (-c_0 \varphi_n^{c_1} ) $$
for each $1 \leq j \leq n$.  Let $0 < \varepsilon < 1/100$.  We consider several cases.  
\begin{enumerate}[(i)]
\item If $\varepsilon n \leq j < n - \varphi_n^c$, then
$$ |\lambda_j(A_n) - \gamma_j| \leq \frac{C \varphi_n^c \gamma_j}{\varepsilon n (\varphi_n^c/n)^{1/3} }\leq \frac{4C \varphi_n^c}{\varepsilon n^{2/3}}  $$
with probability at least $1 - \exp(-c_0 \varphi_n)$ by Lemma \ref{lemma:byy:rigidity}.  

\item Consider the case when $\varphi_n^c < j \leq \varepsilon n$.  Using that $j \leq \varepsilon n$, we have 
$$ \frac{j}{n} = \int_0^{\gamma_j} \rho_{\mathrm{MP}}(x) dx \geq \frac{ \sqrt{2}}{2 \pi} \int_0^{\gamma_j} x^{-1/2} dx = \frac{ \sqrt{2} }{ \pi} \sqrt{\gamma_j} $$
since $\gamma_j \leq 2$.  Thus, we obtain the bound
\begin{equation} \label{eq:gjpi}
	\gamma_j \leq \frac{\pi^2 j^2}{2n^2} \quad \text{ for all } 1 \leq j \leq \varepsilon n
\end{equation}
Since $\varphi_n^c < j \leq \varepsilon n$, we combine \eqref{eq:gjpi} with Lemma \ref{lemma:byy:rigidity} to obtain
$$ |\lambda_j(A_n) - \gamma_j| \leq \frac{ C \pi^2 \varphi_n^c j}{2 (1-\varepsilon)^{1/3} n^2} \leq \frac{ C \pi^2 \varepsilon}{2(1-\varepsilon)^{1/3}} \frac{\varphi_n^c}{n} $$
with probability at least  $1 - \exp(-c_0 \varphi_n)$.  

\item If $1 \leq j \leq \varphi_n^c$, then $\lambda_j \leq \gamma_{3 \varphi_n^c}$ with probability at least  $1 - \exp(-c_0 \varphi_n)$ by Lemma \ref{lemma:byy:rigidity}.  Using the bound \eqref{eq:gjpi} for $\gamma_{3 \varphi_n^c}$, we obtain 
$$ |\lambda_j(A_n) - \gamma_j| \leq 2 \gamma_{3 \varphi_n^c} \leq 9 \pi^2 \frac{ \varphi_n^{2c} }{ n^2} $$
with probability at least  $1 - \exp(-c_0 \varphi_n)$.  

\item Consider the final case when $n - \varphi_n^c \leq j \leq n$.  First we note that for any $k \leq 3 \varphi_n^c$, 
$$ \frac{3 \varphi_n^c}{n} \geq \frac{k}{n} = \int_{\gamma_{n-k}}^4 \rho_{\mathrm{MP}}(x) dx \geq \frac{1}{2 \sqrt{2} \pi} \int_{\gamma_{n-k}}^4 \sqrt{4-x} dx $$
and hence 
\begin{equation} \label{eq:4gnk}
	|4 - \gamma_{n-k}| \leq \left[ \frac{ 9 \sqrt{2} \pi \varphi_n^c}{n} \right]^{2/3} \quad \text{for all} \quad k \leq 3\varphi_n^c.  
\end{equation}
By Lemma \ref{lemma:byy:rigidity} and the estimate above, it follows that
\begin{equation} \label{eq:ljann}
	\lambda_j(A_n) \geq \gamma_{n - 3 \varphi_n^c} \geq 4 - \left[ \frac{ 9 \sqrt{2} \pi \varphi_n^c}{n} \right]^{2/3} 
\end{equation}
with probability at least  $1 - \exp(-c_0 \varphi_n)$.   By \cite[Lemma 3]{SOuniv} and Markov's inequality, there exists constant $C',c'>0$ such that
\begin{align*}
	\Prob \left( \lambda_n(A_n) \geq 4 + {\varphi_n^c}n^{-2/3} \right) &\leq \frac{ \E\left[ \tr (A_n)^{\lfloor n^{2/3} \rfloor} \right] }{ (4 + \varphi_n^c n^{-2/3})^{\lfloor n^{2/3} \rfloor}} \\
	&\leq C' \left( \frac{4}{4 + {\varphi_n^c}n^{-2/3}} \right)^{\lfloor n^{2/3} \rfloor} \\
	&\leq C' \exp(-c' \varphi_n^c).  
\end{align*}
Combining the large deviation bound above with \eqref{eq:ljann} yields
$$ \lambda_j = 4 + O \left( \frac{\varphi_n^c}{n^{2/3}} \right) $$
uniformly for all $n - \varphi_n^c \leq j \leq n$ with probability $1 - \exp(-\Omega(\varphi_n^c))$.  Therefore, by the triangle inequality and \eqref{eq:4gnk}
$$ \sup_{n - \varphi_n^c \leq j \leq n} |\lambda_j - \gamma_j| = O \left( \frac{\varphi_n^c}{n^{2/3}} \right) $$
with probability $1 - \exp(-\Omega(\varphi_n^c))$.  
\end{enumerate}
Since the cases above cover all $1 \leq j \leq n$, the proof of Theorem \ref{thm:sc:rigidity} is complete.  
\end{proof}

\appendix

\section{Proof of Theorem \ref{thm:wigner:rigidity}} \label{sec:wigner:rigidity:proof}

This section is devoted to the proof of Theorem \ref{thm:wigner:rigidity}.  We will need the following version of \cite[Theorem 3.6]{LY}.  

\begin{theorem}[\cite{LY}] \label{thm:LY:rigidity}
Let $M_n = \frac{1}{\sqrt{n}} W_n$ be a real symmetric Wigner matrix where $W_n = (w_{ij})_{1 \leq i,j \leq n}$.  Suppose there exists constants $C_1,c_1>0$ and $0 < \eps < 1/2$ such that
$$ \sup_{1 \leq i < j \leq n} \E[w_{ij}^4] \leq C_1 \quad\text{and}\quad \sup_{1 \leq i < j \leq n} \Prob( |w_{ij}| > n^{1/2 - \eps} ) \leq e^{-n^{c_1}}. $$
Then there exists constants $c>0$ and $n_0$ (which depend only on $C_1, \eps$, and $\sigma$ from Definition \ref{def:wigner}) such that the event
$$ \bigcup_{j=1}^n \left\{ |\lambda_j(M_n) - \eta_j| \leq (\log n)^{c \log \log n} n^{-2/3} [\min\{j,n-j+1\}]^{-1/3} \right\} $$
holds with overwhelming probability for any $n > n_0$.  
\end{theorem}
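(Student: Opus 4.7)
The plan is to deduce Theorem \ref{thm:LY:rigidity} from \cite[Theorem 3.6]{LY} via a standard truncation argument. The Lee--Yin rigidity result is usually stated for matrices with entries having sub-exponentially decaying tails, while our hypothesis only provides a concentrated tail bound $\Prob(|w_{ij}| > n^{1/2-\eps}) \leq e^{-n^{c_1}}$ at scale $n^{1/2-\eps}$ together with a uniform fourth-moment bound. Since $e^{-n^{c_1}}$ decays faster than any polynomial, a union bound over the $O(n^2)$ entries shows that the truncation event $\Omega_n^{\mathrm{trunc}} := \{\max_{i,j} |w_{ij}| \leq n^{1/2-\eps}\}$ holds with overwhelming probability, so it suffices to work on $\Omega_n^{\mathrm{trunc}}$ and its auxiliary counterpart.

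Concretely, for each $1 \leq i \leq j \leq n$ set
$$\hat{w}_{ij} := w_{ij} \indicator{|w_{ij}| \leq n^{1/2-\eps}} - \E\bigl[w_{ij} \indicator{|w_{ij}| \leq n^{1/2-\eps}}\bigr],$$
extend by symmetry, rescale to restore unit off-diagonal variance (and variance $\sigma^2$ on the diagonal), and let $\hat{M}_n$ denote the resulting normalized Wigner matrix. Using the tail hypothesis and Cauchy--Schwarz one checks that $|\E[w_{ij}\indicator{|w_{ij}| > n^{1/2-\eps}}]| = O(e^{-c n^{c_1}})$ and that the variance perturbation is of the same order, so the rescaling factor is $1 + O(e^{-c n^{c_1}})$. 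By construction, $\hat{M}_n$ has bounded centered entries (by $2n^{1/2-\eps}$), zero mean, correct variances, and its fourth moments remain $\leq C_1(1 + o(1))$; hence $\hat{M}_n$ satisfies the hypotheses of \cite[Theorem 3.6]{LY}.

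Applying that theorem to $\hat{M}_n$ gives, with overwhelming probability, $|\lambda_j(\hat{M}_n) - \eta_j| \leq (\log n)^{c \log \log n} n^{-2/3} [\min\{j, n-j+1\}]^{-1/3}$ for all $j$. On $\Omega_n^{\mathrm{trunc}}$, Weyl's inequality yields
$$|\lambda_j(M_n) - \lambda_j(\hat{M}_n)| \leq \|M_n - \hat{M}_n\| \leq \sqrt{n} \cdot O(e^{-c n^{c_1}}),$$
which is much smaller than the rigidity scale. Absorbing this perturbation into a slightly larger constant $c$ completes the proof.

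The main obstacle is bookkeeping rather than probability: one must verify that the constants extracted from \cite[Theorem 3.6]{LY} depend only on $C_1$, $\sigma$, and $\eps$ (and not on distributional features destroyed by truncation), and that the additive/multiplicative corrections from centering and rescaling are uniformly negligible. Once these dependencies are confirmed against the Lee--Yin proof, the truncation reduction itself is entirely routine.
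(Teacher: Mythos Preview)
The paper does not give a proof of this statement at all: it is quoted as a version of \cite[Theorem 3.6]{LY} and used as a black box in the proof of Theorem~\ref{thm:wigner:rigidity}. So there is no ``paper's own proof'' to compare against.

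Your truncation--recentering--rescaling reduction is a perfectly sound way to justify the statement if one starts from a Lee--Yin rigidity theorem phrased under a full sub-exponential tail assumption. The steps you outline (union bound over the $O(n^2)$ entries to get $\Omega_n^{\mathrm{trunc}}$ with overwhelming probability, Cauchy--Schwarz to control the centering and variance perturbations by $O(e^{-cn^{c_1}})$, Weyl's inequality to transfer the rigidity bound back to $M_n$) are all correct and standard. The only minor point worth flagging is that, since the off-diagonal entries are not assumed identically distributed, the variance-restoring rescaling is entry-dependent; but each factor is $1+O(e^{-cn^{c_1}})$, so the resulting matrix is still Wigner and the operator-norm perturbation is still negligible.

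It is worth noting that the very truncation technique you propose here is essentially what the paper deploys one step later, in its proof of Theorem~\ref{thm:wigner:rigidity}: there a more elaborate truncation (with an auxiliary Bernoulli mixture to correct mean and variance exactly) is used to reduce a matrix satisfying only condition~\textbf{C0} to one satisfying the hypotheses of the present theorem. So your argument is in the same spirit as the paper's, just applied one level earlier in the chain of reductions; the paper instead takes the present statement as already covered by \cite{LY}.
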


\begin{proof}[Proof of Theorem \ref{thm:wigner:rigidity}]
Set $\eps_n := n^{1/2 - \eps}$; we remind the reader that $0 < \eps < 1/2$ and hence $\eps_n \rightarrow \infty$ as $n \rightarrow \infty$.  We begin with a truncation.  Let
$$ \hat{w}_{ij} := w_{ij} \indicator{|w_{ij}| \leq \eps_n} \quad \text{for}\quad 1 \leq i \leq j \leq n. $$
We define the values $ \mu_{ij} := \E \hat{w}_{ij}$ and $ \tau^2_{ij} := \E[w_{ij}^2] - \E[ \hat{w}_{ij}^2 ]$ for $1 \leq i \leq j \leq n$.  Then by \eqref{eq:4mom}, we have
\begin{align}
	\sup_{1 \leq i < j \leq n} |\mu_{ij}| \leq \frac{C_1}{\eps_n^3}, \quad \sup_{1 \leq i \leq n} |\mu_{ii}| \leq \frac{\sigma^2}{\eps_n}  \label{eq:supbd1}\\
	\sup_{1 \leq i < j \leq n} \tau_{ij}^2 \leq \frac{C_1}{\eps_n^2}, \quad \sup_{1 \leq i \leq n} \tau_{ii}^2 \leq \sigma^2.  \label{eq:supbd2}
\end{align}

For $1 \leq i \leq j \leq n$ define the random variable $\tilde{w}_{ij}$ as a mixture of
\begin{itemize}
\item $\hat{w}_{ij}$ with probability $1 - \frac{|\mu_{ij}|}{\eps_n} - \frac{\tau_{ij}^2}{\eps_n^2}$ and
\item $z_{ij}$ with probability $\frac{|\mu_{ij}|}{\eps_n} + \frac{\tau_{ij}^2}{\eps_n^2}$,
\end{itemize}
where $z_{ij}, 1 \leq i \leq j \leq n$ are independent Bernoulli random variables independent of $W_n$.  Set $\tilde{w}_{ji} = \tilde{w}_{ij}$ for $1 \leq i < j \leq n$.  Let $\tilde{W}_n = (\tilde{w}_{ij})_{1 \leq i,j \leq n}$ and $\tilde{M}_n = \frac{1}{\sqrt{n}} \tilde{W}_n$.  

We now show that there exists Bernoulli random variables $z_{ij}$ such that $\tilde{M}_n$ is a real symmetric Wigner matrix that satisfies  
\begin{equation} \label{eq:supcheck1}
	\sup_{1 \leq i \leq j \leq n} |\tilde{w}_{ij}| \leq n^{1/2 - \eps/2} \quad \text{almost surely} 
\end{equation}
and 
\begin{equation} \label{eq:supcheck2}
	\sup_{1 \leq i < j \leq n} \E[ \tilde{w}_{ij}^4] \leq 513 C_1.
\end{equation}

In particular, we will construct $z_{ij}$ to be a Bernoulli random variable, symmetric about its mean, such that its mean and second moment satisfy 
\begin{align}
	0 &= \mu_{ij} \left( 1 - \frac{|\mu_{ij}|}{\eps_n} - \frac{\tau_{ij}^2}{\eps_n^2} \right) + \E[z_{ij}] \left(  \frac{|\mu_{ij}|}{\eps_n} + \frac{\tau_{ij}^2}{\eps_n^2} \right) \label{eq:zij1}	\\
	\E[w_{ij}^2] &= (\E[w_{ij}^2] - \tau_{ij}^2) \left( 1 - \frac{|\mu_{ij}|}{\eps_n} - \frac{\tau_{ij}^2}{\eps_n^2} \right) + \E[z_{ij}^2] \left(  \frac{|\mu_{ij}|}{\eps_n} + \frac{\tau_{ij}^2} {\eps_n^2} \right) \label{eq:zij2}
\end{align}
for $1 \leq i \leq j \leq n$.  We first note that, by definition of $\tilde{w}_{ij}$, we only need to consider the case when $\frac{|\mu_{ij}|}{\eps_n} + \frac{\tau_{ij}^2}{\eps_n^2} > 0$.  Suppose $a_{ij},b_{ij}$ are real numbers that satisfy
\begin{align*}
	0 &= \mu_{ij} \left( 1 - \frac{|\mu_{ij}|}{\eps_n} - \frac{\tau_{ij}^2}{\eps_n^2} \right) + a_{ij} \left(  \frac{|\mu_{ij}|}{\eps_n} + \frac{\tau_{ij}^2}{\eps_n^2} \right)	\\
	\E[w_{ij}^2] &= (\E[w_{ij}^2] - \tau_{ij}^2) \left( 1 - \frac{|\mu_{ij}|}{\eps_n} - \frac{\tau_{ij}^2}{\eps_n^2} \right) + b_{ij}^2 \left(  \frac{|\mu_{ij}|}{\eps_n} + \frac{\tau_{ij}^2}{\eps_n^2} \right).
\end{align*}
From the first equation, we obtain
\begin{equation} \label{eq:aij}
	|a_{ij}| \left(  \frac{|\mu_{ij}|}{\eps_n} + \frac{\tau_{ij}^2}{\eps_n^2} \right) \leq |\mu_{ij}|. 
\end{equation}
From the second equation, we have
\begin{equation} \label{eq:bij}
	b_{ij}^2 \left(  \frac{|\mu_{ij}|}{\eps_n} + \frac{\tau_{ij}^2}{\eps_n^2} \right) = (\E[w_{ij}^2] - \tau_{ij}^2) \left(  \frac{|\mu_{ij}|}{\eps_n} + \frac{\tau_{ij}^2}{\eps_n^2} \right) + \tau_{ij}^2 \geq \tau_{ij}^2. 
\end{equation}
We now note that $\tau_{ij}^2 \geq \eps_n |\mu_{ij}|$ by definition of $\hat{w}_{ij}$ and hence
$$ \tau_{ij}^4 + \tau_{ij}^2|\mu_{ij}| \eps_n - |\mu_{ij}|^2 \eps_n^2 \geq 0. $$
It then follows that
\begin{equation} \label{eq:taumu}
	\tau_{ij}^2 \geq \frac{|\mu_{ij}|^2}{\left(  \frac{|\mu_{ij}|}{\eps_n} + \frac{\tau_{ij}^2}{\eps_n^2} \right)}. 
\end{equation}
Combining \eqref{eq:aij}, \eqref{eq:bij}, and \eqref{eq:taumu}, we obtain
$$ b_{ij}^2 \left(  \frac{|\mu_{ij}|}{\eps_n} + \frac{\tau_{ij}^2}{\eps_n^2} \right) \geq \tau_{ij}^2 \geq \frac{\mu_{ij}^2}{\left(  \frac{|\mu_{ij}|}{\eps_n} + \frac{\tau_{ij}^2}{\eps_n^2} \right)} \geq a_{ij}^2 \left(  \frac{|\mu_{ij}|}{\eps_n} + \frac{\tau_{ij}^2}{\eps_n^2} \right) $$
and hence $b_{ij}^2 \geq a_{ij}^2$.  We can now define 
$$ z_{ij} := \left\{
     \begin{array}{lr}
       a_{ij} + \sqrt{b_{ij}^2 - a_{ij}^2} & \text{ with probability } 1/2 \\
       a_{ij} - \sqrt{b_{ij}^2 - a_{ij}^2} & \text{ with probability }1/2
     \end{array}
   \right. .$$
It is straightforward to verify that $z_{ij}$ has mean $a_{ij}$ and second moment $b_{ij}^2$.  

By construction $\tilde{M}_n$ is a real symmetric Wigner matrix.  We now verify \eqref{eq:supcheck1} and \eqref{eq:supcheck2}.  By solving equations \eqref{eq:zij1} and \eqref{eq:zij2} for $\E[z_{ij}]$ and $\E[z_{ij}^2]$ and applying the bounds \eqref{eq:supbd1} and \eqref{eq:supbd2}, it follows that $|z_{ij}| \leq 4 \eps_n$.  Thus we conclude that \eqref{eq:supcheck1} holds for $n$ sufficiently large.  

We also have for $1 \leq i < j \leq n$
\begin{align*}
	\E[\tilde{w}_{ij}^4] &= \E[\hat{w}_{ij}^4] \left( 1 - \frac{|\mu_{ij}|}{\eps_n} - \frac{\tau_{ij}^2}{\eps_n^2} \right) + \E[z_{ij}^4] \left(  \frac{|\mu_{ij}|}{\eps_n} + \frac{\tau_{ij}^2}{\eps_n^2} \right) \\
	& \leq C_1 + (4 \eps_n)^4 2 \frac{C_1}{\eps_n^4} \\
	& \leq 513 C_1
\end{align*}
by \eqref{eq:supbd1} and \eqref{eq:supbd2}.  This verifies \eqref{eq:supcheck2} and hence $\tilde{M}_n$ satisfies the conditions of Theorem \ref{thm:LY:rigidity}.  

By Theorem \ref{thm:LY:rigidity}, there exists a constant $c>0$ such that the event
$$ \Omega_n := \bigcup_{j=1}^n \left\{ |\lambda_j(\tilde{M}_n) - \eta_j| \leq (\log n)^{c \log \log n} n^{-2/3} [\min\{j,n-j+1\}]^{-1/3} \right\}  $$
holds with overwhelming probability.  Thus we obtain
\begin{align*}
	\Prob &\left(  \exists j : |\lambda_j(M_n) - \eta_j| \geq (\log n)^{c \log \log n} n^{-2/3} [\min\{j,n-j+1\}]^{-1/3} \right) \\
	& \qquad \qquad \leq \Prob(\Omega_n^C) + \Prob(M_n \neq \tilde{M}_n).  
\end{align*}

The proof of Theorem \ref{thm:wigner:rigidity} is now complete by noting that
\begin{align*}
	\Prob( M_n \neq \tilde{M}_n) &\leq \sum_{i,j=1}^n \Prob(|w_{ij}| > \eps_n) + \sum_{i,j=1}^n \left(  \frac{|\mu_{ij}|}{\eps_n} + 
\frac{\tau_{ij}^2}{\eps_n^2} \right) \\
	& \leq \frac{2}{\eps_n^4} \sum_{1 \leq i < j \leq n} \E[ w_{ij}^4 \indicator{|w_{ij}| > \eps_n}] + \frac{2}{\eps_n^2} 
\sum_{i =1}^n \E[ w_{ii}^2 \indicator{|w_{ii}| > \eps_n}].
\end{align*}
\end{proof}

\section{Proof of Lemma \ref{lemma:depclt}} \label{sec:depclt}

In order to prove Lemma \ref{lemma:depclt}, we use the central limit theorem for martingale difference sequences.  

\begin{theorem}[Theorem 35.12 of \cite{B}] \label{thm:clt}
For each $N$, suppose $Z_{N1}, Z_{N2}, \ldots, Z_{Nr_N}$ is a real martingale difference sequence with respect to the increasing 
$\sigma$-field $\{\mathcal{F}_{N,j}\}$ having second moments.  If as $N \rightarrow \infty$, 
\begin{equation} \label{clt_variance}
        \sum_{j=1}^{r_N} \E(Z^2_{Nj} \mid \mathcal{F}_{N,j-1}) \overset{P}{\longrightarrow} v^2
\end{equation}
where $v^2$ is a positive constant, and for each $\epsilon > 0$,
\begin{equation} \label{clt_lf}
        \sum_{j=1}^{r_N} \E(Z^2_{Nj} \indicator{|Z_{Nj}| \geq \epsilon}) \rightarrow 0
\end{equation}
then
\begin{equation*}
        \sum_{j=1}^{r_N} Z_{Nj} \overset{\mathcal{L}}{\longrightarrow} \mathcal{N}(0, v^2).
\end{equation*}
\end{theorem}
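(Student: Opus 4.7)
The plan is to prove the martingale CLT via the method of characteristic functions together with a Taylor expansion argument adapted to the filtration. Let $S_N := \sum_{j=1}^{r_N} Z_{Nj}$; by Levy's continuity theorem it suffices to show that for every $t \in \R$,
$$ \E e^{itS_N} \longrightarrow e^{-v^2 t^2/2}. $$
Set $\phi_{Nj}(t) := \E[e^{itZ_{Nj}}\mid \mathcal{F}_{N,j-1}]$ and $\sigma_{Nj}^2 := \E[Z_{Nj}^2 \mid \mathcal{F}_{N,j-1}]$; note $|\phi_{Nj}(t)| \leq 1$ a.s. and $V_N := \sum_j \sigma_{Nj}^2 \to v^2$ in probability by \eqref{clt_variance}.

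I would first extract a uniform smallness property: the Lindeberg-type condition \eqref{clt_lf} implies $\Prob(\max_j |Z_{Nj}| > \epsilon) \leq \epsilon^{-2}\sum_j \E[Z_{Nj}^2 \indicator{|Z_{Nj}| \geq \epsilon}] \to 0$, so $\max_j |Z_{Nj}| \to 0$ in probability and hence $\max_j \sigma_{Nj}^2 \to 0$ in probability. This asymptotic negligibility of individual summands is the key input throughout. A standard truncation (passing to a subsequence if necessary) allows us to assume $V_N \leq K$ for some deterministic constant $K$.

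Next, Taylor expand $e^{ix} = 1 + ix - \tfrac{1}{2}x^2 + r(x)$ with $|r(x)| \leq \min(x^2, |x|^3/6)$. Conditioning and using the martingale property $\E[Z_{Nj}\mid\mathcal{F}_{N,j-1}] = 0$ gives
$$ \phi_{Nj}(t) = 1 - \tfrac{t^2}{2}\sigma_{Nj}^2 + \rho_{Nj}(t), \qquad |\rho_{Nj}(t)| \leq \E\bigl[\min(t^2 Z_{Nj}^2, |t|^3|Z_{Nj}|^3/6)\bigm|\mathcal{F}_{N,j-1}\bigr]. $$
Splitting on $\{|Z_{Nj}| \leq \epsilon\}$ versus its complement and using \eqref{clt_variance} and \eqref{clt_lf} yields $\sum_j |\rho_{Nj}(t)| \to 0$ in probability (the small-$Z$ part contributes at most $|t|^3 \epsilon K/6$, while the large-$Z$ part is precisely the Lindeberg sum). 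Combining with the elementary estimate $\bigl|\prod_j (1-a_j) - \exp(-\sum_j a_j)\bigr| \leq \bigl(\sum_j |a_j|^2\bigr) e^{\sum_j |a_j|}$, valid when $\max_j |a_j| \leq 1/2$, and the fact that $\max_j \sigma_{Nj}^2 \to 0$, one concludes $\prod_{j=1}^{r_N}\phi_{Nj}(t) \to e^{-v^2 t^2/2}$ in probability, and then in $L^1$ by bounded convergence since $|\prod_j\phi_{Nj}(t)| \leq 1$.

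The final and most delicate step is to identify $\E e^{itS_N}$ with $\E\prod_j\phi_{Nj}(t)$ asymptotically. Here one uses the telescoping decomposition
$$ e^{itS_N} - \prod_{j=1}^{r_N}\phi_{Nj}(t) = \sum_{k=1}^{r_N} e^{itS_{N,k-1}}\bigl(e^{itZ_{Nk}} - \phi_{Nk}(t)\bigr)\prod_{j=k+1}^{r_N}\phi_{Nj}(t). $$
The crucial point is that $\Delta_k := e^{itZ_{Nk}} - \phi_{Nk}(t)$ satisfies $\E[\Delta_k\mid\mathcal{F}_{N,k-1}]=0$. Taking $L^2$-norms, cross terms with $k<k'$ vanish after conditioning on $\mathcal{F}_{N,k'-1}$ and pulling out the $\mathcal{F}_{N,k'-1}$-measurable factors, leaving a sum $\sum_k \E\bigl[|\Delta_k|^2 \bigl|\prod_{j>k}\phi_{Nj}(t)\bigr|^2\bigr] \leq \sum_k \E|\Delta_k|^2$. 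The hard part will be that the crude bound $\sum_k \E|\Delta_k|^2 \leq t^2\sum_k \E[Z_{Nk}^2] \to t^2 v^2$ does not decay; one must sharpen it by splitting $\Delta_k$ into a small-$|Z_{Nk}|$ part (where Taylor gives $|\Delta_k|^2 \leq C t^4 Z_{Nk}^4 \leq C t^4 \epsilon^2 Z_{Nk}^2$ and yields an $\epsilon$-controlled bound) and a large-$|Z_{Nk}|$ part controlled directly by the Lindeberg condition \eqref{clt_lf}. This careful bookkeeping, simultaneously invoking both hypotheses \eqref{clt_variance} and \eqref{clt_lf}, is where the martingale structure is essential and is the main technical obstacle.
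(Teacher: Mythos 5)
First, a point of comparison: the paper does not prove this statement at all — it is quoted as Theorem 35.12 of Billingsley \cite{B} and used as a black box in the proof of Lemma \ref{lemma:depclt} — so your attempt has to be judged on its own rather than against an internal argument.

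The first part of your sketch is standard and sound: the negligibility $\max_j|Z_{Nj}|\to0$ and $\max_j\sigma_{Nj}^2\to0$ in probability from \eqref{clt_lf}, the truncation enforcing $V_N\le K$ (this is done by a predictable stopping-time modification, with $K>v^2$, not by passing to a subsequence, but it is indeed routine), the Taylor expansion of $\phi_{Nj}$, and the conclusion $\prod_j\phi_{Nj}(t)\to e^{-v^2t^2/2}$ in probability and hence in $L^1$. The final step, however, is not a technical obstacle to be patched but a wrong route, for two reasons. (i) The orthogonality you invoke fails: in a cross term with $k<k'$ the factor $\prod_{j>k'}\phi_{Nj}(t)$, which appears in both summands, is not $\mathcal{F}_{N,k'-1}$-measurable (each $\phi_{Nj}$ is $\mathcal{F}_{N,j-1}$-measurable, so for $j>k'$ it depends on $Z_{Nk'},Z_{N,k'+1},\ldots$), hence it cannot be pulled out of the conditional expectation and $\E[\Delta_{k'}\mid\mathcal{F}_{N,k'-1}]=0$ does not annihilate the cross terms. (ii) More fundamentally, the quantity you are trying to make small, $e^{itS_N}-\prod_j\phi_{Nj}(t)$, does not tend to $0$ in $L^2$, nor even in probability: already for i.i.d. centered summands the product is deterministic and tends to $e^{-v^2t^2/2}$, while $e^{itS_N}$ converges in law to $e^{itG}$ with $G\sim N(0,v^2)$, so the difference has a nondegenerate limit. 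Consistently with this, the diagonal terms cannot be sharpened: the leading part of $\Delta_k$ is $itZ_{Nk}$, so your bound $|\Delta_k|^2\le Ct^4Z_{Nk}^4$ on $\{|Z_{Nk}|\le\epsilon\}$ drops the first-order term and is false, and in fact $\sum_k\E|\Delta_k|^2=\sum_k\E\bigl[1-|\phi_{Nk}(t)|^2\bigr]\approx t^2v^2$ however you split. Only the difference of the expectations vanishes, and establishing that requires a different mechanism: either Billingsley's own argument, which after the truncation proves $\E\bigl[e^{itS_N+t^2V_N/2}\bigr]\to1$ and then replaces $V_N$ by $v^2$ using \eqref{clt_variance} and boundedness, or McLeish's argument, which works with $T_N=\prod_j(1+itZ_{Nj})$ (for which $\E T_N=1$ exactly by the martingale property), proves uniform integrability of $T_N$, and uses $e^{ix}=(1+ix)e^{-x^2/2+r(x)}$ to write $e^{itS_N}=T_Ne^{-t^2V_N/2+o_P(1)}$. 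One of these would have to replace your last paragraph; as written, the proof does not close.
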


We will also need a number of computations, which we collect in the following lemma.

\begin{lemma}[Computations] \label{lemma:comp}
For each $n \geq 1$, let $(\xi^{(n)}_1, \ldots, \xi^{(n)}_k)$ be a discrete random sample on $[n]:=\{1,2,\ldots, n\}$, where $k = k(n)$ is a positive integer sequence.  Define $\zeta_i^{(n)} := \frac{\xi_i^{(n)}}{n}$ for each $n \geq 1$ and $i=1,2,\ldots,k$.  Let $g: [0,1] \rightarrow \R$ be a bounded function.  Then there exits a constant $C>0$ (depending only on the function $g$) such that
\begin{equation} \label{eq:comp2}
	\left| \E \left[ \E_{j-1} g(\zeta_{j}^{(n)}) \right]^2 - \left[ \E g(\zeta_{1}^{(n)}) \right]^2 \right| \leq \frac{C}{n-j+1}
\end{equation}
and
\begin{equation} \label{eq:comp4}
	\left| \E \left[ \E_{j-1} g(\zeta_{j}^{(n)}) \right]^4 - \left[ \E g(\zeta_{1}^{(n)}) \right]^4 \right| \leq \frac{C}{n-j+1}.
\end{equation}
\end{lemma}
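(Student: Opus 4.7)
The plan is to use the sampling-without-replacement structure to express $\E_{j-1} g(\zeta_j^{(n)})$ in closed form and then reduce both bounds to a single second-moment estimate. Set $\bar g := \E g(\zeta_1^{(n)}) = \frac{1}{n}\sum_{i=1}^{n} g(i/n)$. Since, conditioned on $\mathcal{F}_{j-1}$, the index $\xi_j^{(n)}$ is uniform on $[n] \setminus \{\xi_1^{(n)}, \dots, \xi_{j-1}^{(n)}\}$, I can write
$$ Y_j := \E_{j-1} g(\zeta_j^{(n)}) = \frac{1}{n-j+1}\Bigl(n \bar g - \sum_{\ell=1}^{j-1} g(\zeta_\ell^{(n)})\Bigr) = \bar g - \frac{T_{j-1}}{n-j+1}, $$
where $T_{j-1} := \sum_{\ell=1}^{j-1}[g(\zeta_\ell^{(n)}) - \bar g]$. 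By the tower property $\E Y_j = \bar g$, so $Y_j - \bar g$ is mean zero.

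The core ingredient is a bound on $\E(Y_j - \bar g)^2 = \E T_{j-1}^2/(n-j+1)^2$. Sampling without replacement from the finite population $\{g(i/n)\}_{i=1}^n$ obeys the classical covariance identity $\cov(g(\zeta_\ell^{(n)}), g(\zeta_m^{(n)})) = -\sigma^2/(n-1)$ for $\ell \neq m$, where $\sigma^2 := \var[g(\zeta_1^{(n)})]$; this is a one-line computation from $\E[g(\zeta_\ell)g(\zeta_m)] = \frac{1}{n(n-1)}[(\sum_i g(i/n))^2 - \sum_i g(i/n)^2]$. Expanding $\E T_{j-1}^2$ via variances and pairwise covariances yields
$$ \E T_{j-1}^2 = (j-1)\sigma^2 \cdot \frac{n-j+1}{n-1}, $$
and hence $\E(Y_j - \bar g)^2 \leq \sigma^2/(n-j+1) \leq \|g\|_\infty^2/(n-j+1)$.

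Bound \eqref{eq:comp2} is then immediate from $\E Y_j^2 - \bar g^2 = \E(Y_j - \bar g)^2 + 2\bar g\, \E(Y_j - \bar g) = \E(Y_j - \bar g)^2$. For \eqref{eq:comp4} the naive factorization $Y_j^4 - \bar g^4 = (Y_j - \bar g)(Y_j^3 + Y_j^2 \bar g + Y_j \bar g^2 + \bar g^3)$ combined only with $\E|Y_j - \bar g| \leq C/\sqrt{n-j+1}$ would give a $C/\sqrt{n-j+1}$ estimate, which is too weak. The essential algebraic move is to peel off a \emph{second} factor of $Y_j - \bar g$: subtracting the vanishing quantity $4\bar g^3\, \E(Y_j - \bar g)$ and using the factorization $Y_j^3 + Y_j^2 \bar g + Y_j \bar g^2 - 3 \bar g^3 = (Y_j - \bar g)(Y_j^2 + 2 Y_j \bar g + 3 \bar g^2)$ gives
$$ \E Y_j^4 - \bar g^4 = \E\bigl[(Y_j - \bar g)^2 (Y_j^2 + 2 Y_j \bar g + 3 \bar g^2)\bigr]. $$
Since $|Y_j|, |\bar g| \leq \|g\|_\infty$, the second factor is pointwise bounded by $6\|g\|_\infty^2$, and \eqref{eq:comp4} follows from the second-moment bound above. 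The only non-routine point is this "extract the extra factor of $Y_j - \bar g$" trick; without it one loses a factor of $\sqrt{n-j+1}$ in \eqref{eq:comp4}.
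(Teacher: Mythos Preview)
Your proof is correct and takes a genuinely different route from the paper's. The paper expands $\E\bigl[\E_{j-1}g(\zeta_j^{(n)})\bigr]^r$ for $r=2,4$ as an $r$-fold sum over indices $t_1,\ldots,t_r \notin \{\xi_1^{(n)},\ldots,\xi_{j-1}^{(n)}\}$, takes the outer expectation by counting ordered subsets $S\subset[n]$ avoiding the $t_i$, and then splits into cases according to the coincidence pattern among $t_1,\ldots,t_r$; each non-distinct pattern is shown to contribute $O_g(1/(n-j+1))$, and the distinct case reduces to $\E[g(\zeta_1^{(n)})\cdots g(\zeta_r^{(n)})]$, which is compared to $[\E g(\zeta_1^{(n)})]^r$ by a separate $O_g(1/n)$ replacement argument. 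Your argument instead centers: writing $Y_j=\bar g - T_{j-1}/(n-j+1)$ and invoking the finite-population covariance identity gives $\E(Y_j-\bar g)^2\le \sigma^2/(n-j+1)$ in one line, from which \eqref{eq:comp2} is immediate; for \eqref{eq:comp4} the identity $\E Y_j^4-\bar g^4=\E[(Y_j-\bar g)^2(Y_j^2+2Y_j\bar g+3\bar g^2)]$ (obtained by subtracting the vanishing $4\bar g^3\,\E(Y_j-\bar g)$) reduces the fourth-moment bound to the same second-moment estimate. The paper's approach is self-contained combinatorics but requires the somewhat tedious case analysis of non-distinct quadruples; your approach is shorter, makes transparent why the rate is $1/(n-j+1)$ rather than $1/\sqrt{n-j+1}$, and would extend to higher even powers with the same rate by the analogous ``peel off two factors of $Y_j-\bar g$'' trick.
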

\begin{proof}
We write
\begin{align*}
	\E \left[ \E_{j-1} g(\zeta_{j}^{(n)}) \right]^2 &= \E \sum_{s,t \notin \{ \xi_1^{(n)}, \ldots, \xi_{j-1}^{(n)} \}} \frac{g(s/n) g(t/n)}{(n-j+1)^2} \\
	&= \sum_{S \subset [n]; |S| = j-1} \frac{1}{n(n-1) \cdots (n-j+2)} \sum_{s,t \notin S} \frac{g(s/n) g(t/n)}{(n-j+1)^2} \\
	&= \sum_{s,t=1}^n \frac{g(s/n) g(t/n)}{(n-j+1)^2}  \sum_{|S|=j-1; s,t \notin S} \frac{1}{n(n-1) \cdots (n-j+2)} \\
	&= \sum_{s=1}^n \frac{g^2(s/n)}{(n-j+1)^2} \sum_{|S|=j-1; s \notin S} \frac{1}{n(n-1) \cdots (n-j+2)}  \\
		&\qquad + \sum_{s \neq t} \frac{g(s/n) g(t/n)}{(n-j+1)^2} \sum_{|S|=j=1;s,t \notin S} \frac{1}{n(n-1) \cdots (n-j+2)} \\
	&= \frac{1}{n-j+1} \E g^2(\zeta_1^{(n)}) + \frac{n-j+2}{n-j+1} \E \left[ g(\zeta_1^{(n)}) g(\zeta_2^{(n)}) \right] \\
	&= \E \left[ g(\zeta_1^{(n)}) g(\zeta_2^{(n)}) \right] + O_g \left( \frac{1}{n-j+1}\right),
\end{align*}
where the set $S$ in the sums above is an ordered set.  We now note that 
\begin{align*}
	\E \left[ g(\zeta_1^{(n)}) g(\zeta_2^{(n)}) \right] &= \sum_{s \neq t} \frac{g(s/n) g(t/n)}{n(n-1)} \\
	&= \sum_{s \neq t} \frac{g(s/n) g(t/n)}{n^2} + O_g \left( \frac{1}{n} \right) \\
	&= \sum_{s,t=1}^n \frac{g(s/n) g(t/n)}{n^2} + O_g \left( \frac{1}{n} \right).
\end{align*}
Combing the estimates above yields \eqref{eq:comp2}.  For \eqref{eq:comp4}, we write
\begin{align*}
	\E \left[ \E_{j-1} g(\zeta_j^{(n)}) \right]^4 &= \E \sum_{t_1, \ldots, t_4 \notin \{ \xi_1^{(n)}, \ldots, \xi_{j-1}^{(n)} \}} \frac{ g(t_1/n) \cdots g(t_4/n)}{(n-j+1)^4} \\
	&= \sum_{S \subset [n]; |S|=j-1} \frac{1}{n(n-1)\cdots(n-j+2)} \sum_{t_1, \ldots, t_4 \notin S} \frac{g(t_1/n) \cdots g(t_4/n)}{(n-j+1)^4} \\
	&= \sum_{t_1,\ldots,t_4=1}^n \frac{g(t_1/n) \cdots g(t_4/n)}{(n-j+1)^4} \sum_{|S|=j-1; t_1, \ldots, t_4 \notin S} \frac{1}{n(n-1)\cdots (n-j+2)},
\end{align*}
where the set $S$ in the sums above is an ordered set.  We now consider several cases where $t_1, \ldots, t_4$ are not distinct.  
\begin{enumerate}
\item When the first sum is over $t_1 = t_2$ and $t_1, t_3, t_4$ are distinct, we obtain
\begin{align*}
	\sum_{t_1, t_3, t_4} & \frac{g(t_1/n) \cdots g(t_4/n)}{(n-j+1)^4} \sum_{|S|=j-1; t_1,t_3,t_4 \notin S} \frac{1}{n(n-1)\cdots (n-j+2)} \\
	&= \sum_{t_1,t_3,t_4} \frac{ g(t_1/n)^2 g(t_3/n) g(t_4/n)}{(n-j+1)^4} \frac{(n-3) \cdots (n-j-1)}{n(n-1) \cdots (n-j+2)} \\
	&= \E\left[ g^2(\zeta_1^{(n)}) g(\zeta_2^{(n)}) g(\zeta_3^{(n)}) \right] \frac{(n-j)(n-j-1)}{(n-j+1)^3} \\
	&= O_g \left( \frac{1}{n-j+1} \right).
\end{align*} 
\item When the sum is over $t_1=t_2=t_3 \neq t_4$, we have
\begin{align*}
	\sum_{t_1 \neq  t_4} & \frac{g(t_1/n)^3 g(t_4/n)}{(n-j+1)^4} \sum_{|S|=j-1; t_1, t_4 \notin S} \frac{1}{n(n-1)\cdots (n-j+2)} \\
	&= \E\left[ g(\zeta_1^{(n)})^3 g(\zeta_2^{(n)}) \right] \frac{(n-j+1)(n-j)}{(n-j+1)^4} \\
	&= O_g \left( \frac{1}{(n-j+1)^2} \right).
\end{align*}
\item When the sum is over $t_1=t_2=t_3=t_4$, we obtain
\begin{align*}
	\sum_{t_1=1}^n &\frac{g(t_1/n)^4}{(n-j+1)^4} \sum_{|S| = j-1; t_1 \notin S} \frac{1}{n(n-1)\cdots (n-j+2)} \\
	&= \E \left[ g(\zeta_1^{(n)}) ^4 \right] \frac{1}{(n-j+1)^3} \\
	&= O_g \left( \frac{1}{(n-j+1)^3} \right).
\end{align*}
\end{enumerate}
Combining the above bounds yields
\begin{align*}
	\E &\left[ \E_{j-1} g(\zeta_j^{(n)}) \right]^4 \\
	&= \sum_{t_1,\ldots,t_4 \text{ distinct}} \frac{g(t_1/n) \cdots g(t_4/n)}{(n-j+1)^4} \sum_{|S|=j-1; t_1, \ldots, t_4 \notin S} \frac{1}{n(n-1)\cdots (n-j+2)}  \\
	&\qquad + O_g \left( \frac{1}{n-j+1} \right). 
\end{align*}
When $t_1, \ldots, t_4$ are distinct, we can compute the inside sum and obtain
\begin{align*}
	\E \left[ \E_{j-1} g(\zeta_j^{(n)}) \right]^4 &= \E \left[ g(\zeta_1^{(n)}) \cdots g(\zeta_4^{(n)}) \right] \frac{(n-j)(n-j-1)(n-j-2)}{(n-j+1)^3} \\
	&\qquad+ O_g \left( \frac{1}{n-j+1} \right) \\
	&= \E \left[ g(\zeta_1^{(n)}) \cdots g(\zeta_4^{(n)}) \right] + O_g \left( \frac{1}{n-j+1} \right).
\end{align*}
Lastly, we note that
\begin{align*}
	\E \left[ g(\zeta_1^{(n)}) \cdots g(\zeta_4^{(n)}) \right] &= \sum_{t_1, \ldots, t_4 \text{ distinct}} \frac{g(t_1/n) \cdots g(t_4/n)}{n (n-1) (n-2)(n-3)} \\
	&= \sum_{t_1, \ldots, t_4 \text{ distinct}} \frac{g(t_1/n) \cdots g(t_4/n)}{n^4} + O_g \left( \frac{1}{n} \right) \\
	&= \sum_{t_1, \ldots, t_4=1}^n \frac{g(t_1/n) \cdots g(t_4/n)}{n^4} + O_g \left( \frac{1}{n} \right) \\
	&= \left[ \E g(\zeta_1^{(n)}) \right]^4 + O_g \left( \frac{1}{n} \right),
\end{align*}
and the proof of Lemma \ref{lemma:comp} is complete.  
\end{proof}

\begin{proof}[Proof of Lemma \ref{lemma:depclt}]
We will use Theorem \ref{thm:clt} to prove Lemma \ref{lemma:depclt}.  We write 
$$ \alpha_{n,k} \sum_{i=1}^k [f({\zeta^{(n)}_i}) - \E f({\zeta^{(n)}_i})] = \sum_{j=1}^k Z_{n,j} $$
where
$$ Z_{n,j} := \alpha_{n,k} \sum_{i=1}^k \left[ \E_j f({\zeta^{(n)}_i}) - \E_{j-1} f({\zeta^{(n)}_i}) \right], $$
$\E_j$ denotes expectation with respect to the $\sigma$-algebra $\mathcal{F}_{n,j}$, and $\mathcal{F}_{n,j} = \sigma(\xi^{(n)}_1, \ldots, \xi^{(n)}_j)$.  

By considering the cases when $i < j$, $i=j$, and $i > j$, we have that
$$ Z_{n,j} = \alpha_{n,k} \left[ f(\zeta^{(n)}_j) - \E_{j-1} f(\zeta^{(n)}_j) + (k-j) \left( \E_{j} f(\zeta^{(n)}_{j+1}) - \E_{j-1} f(\zeta^{(n)}_j) \right) \right].  $$
We now compute
\begin{align*}
	\E_j f(\zeta^{(n)}_{j+1}) &= \sum_{t \notin \{ \xi^{(n)}_1, \ldots, \xi^{(n)}_j \}} f(t/n) \frac{1}{n-j} \\
	&= \left( 1 + \frac{1}{n-j}\right) \E_{j-1} f(\zeta^{(n)}_j) - \frac{1}{n-j} f(\zeta^{(n)}_j).
\end{align*}
Thus,
$$ Z_{n,j} = \alpha_{n,k} \frac{n-k}{n-j} \left[ f(\zeta^{(n)}_j) - \E_{j-1} f(\zeta^{(n)}_j) \right].  $$

Since $f$ is bounded and $\alpha_{n,k} = o(1)$ it follows that $Z_{n,j}= o(1)$ uniformly for $j=1,2,\ldots, k$.  So the events $\{|Z_{n,j}| > \epsilon \}$ are empty for $n$ sufficiently large.  Thus \eqref{clt_lf} holds.  

We now verify \eqref{clt_variance} and compute the limiting variance.  We note that 
\begin{equation} \label{eq:ejz2}
	\sum_{j=1}^k \E_{j-1} [Z_{n,j}^2] = \alpha_{n,k}^2 (n-k)^2 \sum_{j=1}^k \frac{1}{(n-j)^2} \left[ E_{j-1} f^2(\zeta^{(n)}_j) - \left( E_{j-1} f(\zeta^{(n)}_j) \right)^2 \right].
\end{equation}
We will show that
\begin{equation} \label{eq:comp:show1}
	\alpha_{n,k}^2 (n-k)^2 \E \left| \sum_{j=1}^k \frac{1}{(n-j)^2} \left[ \E_{j-1} f^2(\zeta_{j}^{(n)}) - \E f^2(\zeta_1^{(n)}) \right] \right| \longrightarrow 0
\end{equation}
and
\begin{equation} \label{eq:comp:show2}
	\alpha_{n,k}^2 (n-k)^2 \E \left| \sum_{j=1}^k \frac{1}{(n-j)^2} \left[ \left( \E_{j-1} f(\zeta_j^{(n)}) \right)^2  - \left( \E f(\zeta_1^{(n)}) \right)^2 \right] \right| \longrightarrow 0
\end{equation}
as $n \rightarrow \infty$.  

For \eqref{eq:comp:show1}, it suffices to prove that
$$ \alpha_{n,k}^2 (n-k)^2 \sum_{j=1}^k \frac{1}{(n-j)^2} \sqrt{ \E \left| \E_{j-1} f^2(\zeta_{j}^{(n)}) - \E f^2(\zeta_1^{(n)}) \right|^2 } \longrightarrow 0. $$
By Lemma \ref{lemma:comp}, we have that
\begin{align*}
	 \alpha_{n,k}^2 & (n-k)^2 \sum_{j=1}^k \frac{1}{(n-j)^2}  \sqrt{\E \left| \E_{j-1} f^2(\zeta_{j}^{(n)}) - \E f^2(\zeta_1^{(n)}) \right|^2} \\
	 &=  \alpha_{n,k}^2 (n-k)^2 \sum_{j=1}^k \frac{1}{(n-j)^2} \sqrt{ \E \left[ \E_{j-1} f^2(\zeta_{j}^{(n)}) \right]^2 - \left[ \E f^2(\zeta_1^{(n)}) \right]^2 } \\
	&\leq  \alpha_{n,k}^2 (n-k)^2 \sum_{j=1}^k \frac{\sqrt{C}}{(n-j)^{2.5}}  \\
	& \leq \frac{ \sqrt{C}}{\sqrt{n-k}} \alpha_{n,k}^2 (n-k)^2 \sum_{j=1}^k \frac{1}{(n-j)^{2}}  \\
	&\leq \frac{ \sqrt{C}}{\sqrt{n-k}} \alpha_{n,k}^2 (n-k)^2 \frac{k}{(n-1)(n-k)} \longrightarrow 0.
\end{align*}
Here the last inequality comes from a comparison argument between $\sum_{j=1}^k \frac{1}{(n-j)^2}$ and an appropriate integral.  This verifies \eqref{eq:comp:show1}.  The proof of \eqref{eq:comp:show2} is similar and uses \eqref{eq:comp4}.  

Using \eqref{eq:comp:show1} and \eqref{eq:comp:show2}, we have that
$$ \sum_{j=1}^k \E_{j-1} [Z_{n,j}^2] - \var[ f(\zeta_1^{(n)})] \alpha_{n,k}^2 (n-k)^2 \sum_{j=1}^k \frac{1}{(n-j)^2} \longrightarrow 0 $$
in probability as $n \rightarrow \infty$.  However, a comparison argument verifies that
$$ \lim_{n \rightarrow \infty} \alpha_{n,k}^2 (n-k)^2 \sum_{j=1}^k \frac{1}{(n-j)^2} = 1 $$
and the proof of Lemma \ref{lemma:depclt} is complete.  
\end{proof}

\end{document}